\newcommand{\Hom}{\mathrm{Hom}}
\newcommand{\bN}{{\mathbb N}}
\newcommand{\bZ}{{\mathbb Z}}
\newcommand{\bR}{{\mathbb R}}
\newcommand{\bC}{{\mathbb C}}
\newcommand{\Ftwo}{{\mathbb F}_2}
\newcommand{\Fp}{{\mathbb F}_p}
\newcommand{\WAFOM}{\mathrm{WF}}
\newcommand{\Zb}{\mathbb{Z}_b}
\newcommand{\prob}{\mathrm{prob}}
\newcommand{\bx}{{\boldsymbol{x}}}
\newcommand{\bk}{{\boldsymbol{k}}}
\begin{document}
\hmjlogo{}{}{}{}                
\title{WAFOM over abelian groups for quasi-Monte Carlo point sets
}

\subjclass{Primary 11K45, 65D30, 11K38.}
\keywords{numerical integration, Quasi-Monte Carlo, WAFOM,
Dick weight, digital net,
MacWilliams-type identity}

\author[K. Suzuki]{Kosuke Suzuki}
\address{Graduate School of Mathematical Sciences\\
The University of Tokyo\\
3-8-1 Komaba, Meguro-ku, Tokyo 153-8914 JAPAN} 
\email{ksuzuki@ms.u-tokyo.ac.jp}
\grants{
The work of the author was supported by the Program for Leading Graduate Schools, MEXT, Japan.\\
The published version is available in Hiroshima Math. J. 45 (2015), no.\ 3, 341--364.
}

\begin{abstract}
In this paper, we study quasi-Monte Carlo (QMC) rules for numerical integration.
J.~Dick proved a Koksma-Hlawka type inequality
for $\alpha$-smooth integrands and gave an explicit construction of QMC rules 
achieving the optimal rate of convergence in that function class.
From this inequality,
Matsumoto, Saito and Matoba introduced
the Walsh figure of merit (WAFOM) $\WAFOM(P)$ for an $\Ftwo$-digital net $P$
as a quickly computable quality criterion for $P$ as a QMC point set.
The key ingredient for obtaining WAFOM is the Dick weight,
a generalization of the Hamming weight
and the Niederreiter-Rosenbloom-Tsfasman (NRT) weight.

We extend the notions of the Dick weight and WAFOM
over a general finite abelian group $G$, 
and show that this version of WAFOM satisfies Koksma-Hlawka type inequality
when $G$ is cyclic.
We give a MacWilliams-type identity 
on weight enumerator polynomials for the Dick weight,
by which we can compute the minimum Dick weight as well
as WAFOM\@.
We give a lower bound on WAFOM of order  $N^{-C'_G(\log N)/s}$
and an upper bound on lowest WAFOM
of order $N^{-C_G(\log N)/s}$ for given $(G,N,s)$
if $(\log N)/s$ is sufficiently large,
where $C'_G$ and $C_G$ are constants depending only on the cardinality of $G$
and $N$ is the cardinality of quadrature rules in $[0,1)^s$.
These bounds
generalize the bounds given by Yoshiki and others given for $G=\Ftwo$.
\end{abstract}

\maketitle

\section{Introduction}
Quasi-Monte Carlo (QMC) integration is a method for numerical integration
using the average of function evaluations
as an approximation of the true integration value.
In QMC integration, sample points are chosen deterministically,
while in Monte-Carlo integration they are chosen randomly.
Thus, how to construct point sets has been a major concern in QMC theory.
One of the known good construction frameworks is digital nets,
which is based on linear algebra over finite fields
(or more generally over finite rings).

A strong analogy between coding theory and QMC point sets
is well known (see, e.g., \cite{Chen2002eci,Niederreiter1992rng,Skriganov2001cta}).
In coding theory, the minimum Hamming weight is used for a criterion for linear codes.
Analogically, Niederreiter-Rosenbloom-Tsfasman (NRT) weight
is a criterion for digital nets in QMC theory  \cite{Niederreiter1986ldp,Rozenblyum1997c$m}.
More precisely, the minimum NRT weight is essentially equivalent to $t$-value
and gives an upper bound on the star-discrepancy,
which are important criteria for QMC point sets.
Recently, based on Dick's work \cite{Dick2008wsc},
Matsumoto, Saito and Matoba defined the Dick weight $\mu$ on digital nets over $\Ftwo$
and related it to a criterion called the Walsh figure of merit (WAFOM) in \cite{Matsumoto2014acf}.
In this paper, as a generalization of \cite{Matsumoto2014acf},
we extend the notions of the Dick weight and WAFOM for digital nets over $\Zb$,
and more generally, for subgroups of $G^{s \times n}$
where $G$ is a finite abelian group.
Furthermore, we establish a MacWilliams-type identity for the Dick weight,
which gives a computable formula of the minimum Dick weight and WAFOM.

Let us recall the notion of QMC integration.
For an integrable function $f \colon [0,1)^s \to \bR$
and a finite point set in an $s$-dimensional unit cube
$\mathcal{P} \subset [0,1)^s$,
quasi Monte-Carlo (QMC) integration of $f$ by $\mathcal{P}$ is an approximation value
$$
I_\mathcal{P}(f) := \frac{1}{N}\sum_{\bx \in \mathcal{P}}{f(\bx)}
$$
of the actual integration
$$
I(f) := \int_{[0,1)^s}{f(\bx) \, d\bx},
$$
where $N := |\mathcal{P}|$ is the cardinality of $\mathcal{P}$.
The QMC integration error is defined as
$
\mathrm{Err}(f;\mathcal{P}) := |I_\mathcal{P}(f) - I(f)|.
$
If the integrand $f$ has bounded variation in the sense of Hardy and Krause,
the Koksma-Hlawka inequality shows that
$
\mathrm{Err}(f;\mathcal{P}) \leq V(f)D(\mathcal{P}),
$
where $V(f)$ is the total variation of $f$
and $D(\mathcal{P})$ is the star-discrepancy of $\mathcal{P}$.
There have been many studies on the construction of low-discrepancy point sets,
i.e., point sets with $D(\mathcal{P}) \in O(N^{-1+\varepsilon})$.
In particular, digital nets and sequences are a general framework for the
construction of good point sets.
We refer to \cite{Dick2010dna} and \cite{Niederreiter1992rng} 
for the general information on QMC integration
and digital nets and sequences.

Recently, higher order convergence results for digital nets, i.e.,
$\mathrm{Err}(f;\mathcal{P})$ converges faster than $N^{-1}$,
has been established.
For a given integer $\alpha > 1$,
Dick gave quadrature rules for $\alpha$-smooth integrands
which achieve $\mathrm{Err}(f;\mathcal{P}) \in O(N^{-\alpha+\varepsilon})$ 
\cite{Dick2008wsc}.
He introduced a weight which gives a bound on a criterion
$\WAFOM_{\alpha}(\mathcal{P})$
(he did not give a name and we use the name in \cite{Matsumoto2014acf})
for a digital net $\mathcal{P}$ over a finite field with cardinality $b$,
and proved a Koksma-Hlawka type inequality
$\mathrm{Err}(f;\mathcal{P})
\leq C_{b,s,\alpha} \|f\|_\alpha \cdot \WAFOM_{\alpha}(\mathcal{P})$,
where $\|f\|_\alpha$ is a norm of $f$ for a Sobolev space and
$C_{b,s,\alpha}$ is a constant depend only on $b$, $s$, and $\alpha$.
Later he improved the constant factor of the lowest $\WAFOM_\alpha$
for digital nets over a finite cyclic group \cite{Dick2009dwc}.

As a discretized version of Dick's method,
Matsumoto, Saito and Matoba introduced
the Dick weight $\mu$ and a related criterion WAFOM
$\WAFOM(P)$ for an $\Ftwo$-digital net $P$ \cite{Matsumoto2014acf}.
WAFOM also satisfies a Koksma-Hlawka type inequality (with some errors
due to discretization).
One remarkable merit of WAFOM is that WAFOM is easily computable
by the inversion formula \cite[(4.2)]{Matsumoto2014acf},
which is easier to implement than the formula of $\mathrm{WF}_\alpha$
derived from \cite[Section~4]{Baldeaux2012ecw}.
Using this merit, they executed a random search of low-WAFOM point sets
and showed that such point sets perform better
than some standard low-discrepancy point sets.
There are several studies on low-WAFOM point sets.
The existence of low-WAFOM point sets 
was shown by Matsumoto and Yoshiki
\cite{MatsumotoYoshiki}.
The author proved that the interlacing construction for higher order QMC point sets
with Niederreiter-Xing sequences over a finite field
gives low-WAFOM point sets \cite{Suzuki2014ecp}.

In this paper, as a generalization of \cite{Matsumoto2014acf}
we propose the Dick weight and WAFOM for digital nets
over $\Zb$ and for subgroups of $G^{s \times n}$
where $G$ is a finite abelian group.
WAFOM over $\Zb$ is also a discretized version of Dick's method
and thus satisfies a Koksma-Hlawka type inequality.
Moreover, we give a MacWilliams-type identity of weight enumerator polynomials
for the Dick weight.
Using this identity we obtain a computable formula of the minimum Dick weight
as well as WAFOM,
which is a generalization of the inversion formula for WAFOM in the dyadic case.
Furthermore, we give generalizations of known properties of WAFOM over $\Ftwo$
in \cite{MatsumotoYoshiki} and \cite{Yoshiki2014lbw}.
More precisely, we give a lower bound on WAFOM and
prove the existence of low-WAFOM point sets.
In particular, we improve some of the results in \cite{MatsumotoYoshiki}.
These results imply that there exist positive constants $C, D, D'$ and  $F$
depending only on $b$ and
independent of $s$, $n$ and $N$ such that
$
N^{-C\log{N}/s}
\leq \min\{ \WAFOM(P) \mid \text{$P$ is a digital net}, |P| \leq N\}
\leq F N^{-D(\log{N})/s +D'},
$ 
if $(\log N)/s$ is sufficiently large.

These results are similar to the works of Dick,
but there is no implication between them.
Dick fixed the smoothness $\alpha$,
while our method requires $n$-smoothness on the function
where $n$ is as above.
Thus, in our case, the function class is getting smaller for $n$ being increased.

The rest of the paper is organized as follows.
In Section~\ref{sec:Preliminary}, we introduce the necessary background and notation,
such as the discretization scheme of QMC integration,
the discrete Fourier transform, and Walsh functions.
In Section~\ref{sec:WAFOM},
we define the Dick weight and WAFOM over a general finite abelian group $G$,
and prove a Koksma-Hlawka type inequality in the case that $G$ is cyclic.
In Section~\ref{sec:MacWilliams identity}, we define the weight enumerator polynomial,
give the MacWilliams-type identity for the Dick weight,
and give a computable formula of WAFOM\@.
In Section~\ref{sec:Estimation of WAFOM}, we give a lower bound on WAFOM,
prove the existence of low-WAFOM point sets,
and study the order of WAFOM.

\section{Preliminaries}\label{sec:Preliminary}
Throughout this paper, we use the following notation.
Let $\bN$ be the set of positive integers
and $\bN_0 := \bN \cup \{0\}$.
Let $b$ be an integer greater than 1.
Let $\bZ_b = \bZ/b\bZ$
be the residue class ring modulo $b$.
We identify $\bZ_b$ with the set $\{0, 1, \dots, b-1 \} \subset \bZ$.
For a set $S$, we denote by $|S|$ the cardinality of $S$.
For a group or a ring $R$ and positive integers $s$ and $n$,
we denote by $R^{s \times n}$ the set of $s \times n$ matrices
with components in $R$.
We denote by $O$ the zero matrix.
We denote by $e$ the base of the natural logarithm.

\subsection{Discretized QMC in base $b$}\label{subsec:QMC}
In this subsection, we explain discretized QMC in base $b$.
This discretization is a straightforward generalization of
the $b=2$ case in \cite{Matsumoto2014acf}.

Let $s$ be a positive integer.
Let $\mathcal{P} \subset [0,1)^s$ be a point set
in an $s$-dimensional unit cube with finite cardinality $|\mathcal{P}| = N$,
and let $f \colon [0,1)^s \to \bR$ be an integrable function.
Recall that quasi-Monte Carlo integration by $\mathcal{P}$ is an approximation value
$$
I_\mathcal{P}(f) := \frac{1}{N}\sum_{\bx \in \mathcal{P}}{f(\bx)}
$$
of the actual integration
$$
I(f) := \int_{[0,1)^s}{f(\bx) \, d\bx}.
$$
The QMC integration error is
$
\mathrm{Err}(f;\mathcal{P}) := |I_\mathcal{P}(f) - I(f)|.
$

Here, we fix a positive integer $n$,
which is called the degree of discretization or the precision.
We consider an $n$-digit discrete approximation in base $b$.
We associate a matrix $B :=(b_{i,j}) \in \bZ_b^{s \times n}$
with a point
$\bx_B = (x_B^1, \dots, x_B^s) =
(\sum_{j=1}^{n}{b_{1,j}b^{-j}}, \dots, \sum_{j=1}^{n}{b_{s,j}b^{-j}}) \in [0,1)^s$,
and with an $s$-dimensional cube 
$
\mathbf{I}_B := \prod_{i=1}^{s} I_{i}
\subset [0,1)^s
$,
where each edge
$
I_{i} := [x_B^i, x_B^i+b^{-n})
$
is a half-open interval with length $b^{-n}$.
We define $n$-digit discrete approximation $f_n$ of $f$ as
$$
f_n \colon \bZ_b^{s \times n} \to \bR,
\:\:   B :=(b_{i,j}) \mapsto \frac{1}{\mathrm{Vol}(\mathbf{I}_B)} \int_{\mathbf{I}_B}{f(\bx) \, d\bx}.
$$
Let $P$ be a subset of $\bZ_b^{s \times n}$.
We define $n$-th discretized QMC integration of $f$ by $P$ as
$$
I_{P,n}(f) := \frac{1}{|P|}\sum_{B \in P}{f_n(B)}
$$
and define the $n$-th discretized QMC integration error as
$$
\mathrm{Err}(f;P,n) := |I_{P,n}(f) - I(f)|.
$$
For each $B\in P$, we take the center point of the cube $I_B$.
Let $\mathcal{P} \subset [0,1)^s$ be the set of such center points
given by $P$.
By a slight extension of \cite[Lemma 2.1]{Matsumoto2014acf},
if $f$ is continuous with Lipschitz constant $K$ then we have
$
|I_{P,n}(f) - I_{\mathcal{P}}(f)| \leq K\sqrt{s}b^{-n}.
$
We take $n$ large enough 
so that $K\sqrt{s}b^{-n}$ is negligibly small compared to 
the order of QMC integration error $|I_{\mathcal{P}}(f) - I(f)|$
by $\mathcal{P}$.
Then we may regard the $n$-th discretized QMC integration error
$\mathrm{Err}(f;P,n)$
as an approximation of the QMC integration error $\mathrm{Err}(f;P)$.

As point sets, in this paper we consider subgroups of
$\bZ_b^{s \times n}$
as well as digital nets.
The definition of digital nets over finite rings is given in \cite{Larcher1996dna}.
we adopt an equivalent definition of digital nets,
which is proposed as digital nets with generating matrices in
\cite[Definition~4.3]{Dick2013fcw}.

\begin{definition}
Let $C_1, \dots, C_s \in \bZ_b^{n \times d}$
be matrices and let $X_1, \dots, X_d \in \bZ_b^{s \times n}$ be
defined by the $j$-th row of $X_i$ is the transpose of the $i$-th column of $C_j$.
Assume that $X_1, \dots, X_d$ are a free basis of $\bZ_b^{s \times n}$
as a $\bZ_b$-module.
For an integer $k$ with $0 \leq k \leq b^d-1$,
we define a matrix $\bx_k \in \bZ_b^{s \times n}$
as
$\bx_k = \sum_{i=1}^{d}{\kappa_{i-1} X_i}$,
where $k = \kappa_0 + \kappa_1 b^1 + \cdots + \kappa_{d-1} b^{d-1}
\,\, (0 \leq \kappa_i \leq b-1)$
is the $b$-adic expansion of $k$.
We call the set $\{\bx_0, \dots, \bx_{b^d-1} \}$ the digital net
generated by the matrices $C_1, \dots, C_s$.
\end{definition}
It is easy to see that
digital nets become subgroups of $\bZ_b^{s \times n}$.

\subsection{Discrete Fourier transform}
In this subsection, we recall the notion of character groups and
the discrete Fourier transform.
We refer to \cite{Serre1977lrf} for general information on character groups.
Let $G$ be a finite abelian group.
Let $T := \{z \in \bC \mid |z| = 1 \}$
be the multiplicative group of complex numbers of absolute value one.
Let $\omega_b = \exp(2 \pi \sqrt{-1}/ b)$.

\begin{definition}
We define the character group of $G$
by $G^\vee := \Hom(G,T)$,
namely $G^\vee$ is the set of group homomorphisms
from $G$ to $T$.
\end{definition}
There is a natural pairing
$
\bullet \colon G^\vee \times G \to T, \,\,
(h,g) \mapsto h \bullet g := h(g).
$

We can see that $\bZ_b^\vee$ is isomorphic to
$\bZ_b$ as an abstract group.
Throughout this paper, 
we identify $\bZ_b^\vee$ with $\bZ_b$ through a pairing
$
\bullet \colon \bZ_b \times \bZ_b \to T, \,\,
(h, g) \mapsto
h \bullet g := \omega_b^{hg},
$
where $hg$ is the product in $\bZ_b$.

Let $R$ be a commutative ring containing $\bC$.
Let $f:G \to R$ be a function.
We define the discrete Fourier transform of $f$ as below.

\begin{definition}
The discrete Fourier transform of $f$ is defined by
$\widehat{f} \colon G^\vee \to R, \,\,
h \mapsto \frac{1}{|G|}\sum_{g \in G}{f(g)(h \bullet g)}$.
Each value $\widehat{f}(h)$ is called a discrete Fourier coefficient.
\end{definition}

We assume that $P \subset G$ is a subgroup.
We define $P^\perp := \{h \in G^\vee \mid h \bullet g = 1 \text{ for all
} g \in P \}$.
Since
$P^\perp$ is the kernel of the restriction map $G^{\vee} \to P^{\vee}$,
we have $|P^\perp| = |G|/|P|$.
We recall the orthogonality of characters.

\begin{lemma}\label{lem:cyclicsum}
Suppose that $P \subset G$ is a subgroup and $g \in G$.
Then we have
$$\sum_{h \in P^\perp}{h \bullet g} =
\begin{cases}
|P^\perp| & \text{if $g \in P$},\\ 
0   & \text{if $g \notin P$}.
\end{cases}
$$
\end{lemma}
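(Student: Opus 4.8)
The plan is to split into the two cases of the statement and to handle the nontrivial one by the standard ``shift'' argument for character sums, the only genuine input being the nondegeneracy of the character pairing. The case $g \in P$ is immediate: by the very definition of $P^\perp$, every $h \in P^\perp$ satisfies $h \bullet g = 1$, so $\sum_{h \in P^\perp} h \bullet g = \sum_{h \in P^\perp} 1 = |P^\perp|$.

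The substantive case is $g \notin P$, where I want to show the sum vanishes. Write $S := \sum_{h \in P^\perp} h \bullet g$. The key observation is that $P^\perp$ is a subgroup of $G^\vee$ (it is the kernel of the restriction map $G^\vee \to P^\vee$, as noted above), so for any fixed $h_0 \in P^\perp$ the translation $h \mapsto h_0 h$ is a bijection of $P^\perp$ onto itself. Using that the pairing is bilinear, i.e.\ $(h_0 h) \bullet g = (h_0 \bullet g)(h \bullet g)$, I reindex the sum to obtain
\[
S = \sum_{h \in P^\perp} (h_0 h) \bullet g = (h_0 \bullet g) \sum_{h \in P^\perp} h \bullet g = (h_0 \bullet g)\, S .
\]
Hence $(1 - h_0 \bullet g)\, S = 0$, and therefore $S = 0$ as soon as I can produce a single $h_0 \in P^\perp$ with $h_0 \bullet g \neq 1$.

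Producing such an $h_0$ is the heart of the argument, and I expect it to be the main obstacle. Since $P^\perp$ consists exactly of those characters of $G$ that are trivial on $P$, pullback along the quotient map $q \colon G \to G/P$ identifies $P^\perp$ with the full character group $(G/P)^\vee$. Because $g \notin P$, its image $q(g)$ is a nontrivial element of the finite abelian group $G/P$, and the character pairing of a finite abelian group with its dual is nondegenerate (see \cite{Serre1977lrf}); thus there exists $\psi \in (G/P)^\vee$ with $\psi(q(g)) \neq 1$. Setting $h_0 := \psi \circ q$ then yields an element of $P^\perp$ with $h_0 \bullet g = \psi(q(g)) \neq 1$, which finishes the vanishing case. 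Once this separating character is in hand the shift computation above is routine, so the entire content of the lemma reduces to the statement that characters separate the points of a finite abelian group.
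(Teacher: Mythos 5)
Your proof is correct. Note that the paper itself states this lemma without proof, presenting it as a recalled standard fact (orthogonality of characters, with \cite{Serre1977lrf} cited earlier for background), so there is no in-paper argument to compare against. Your write-up is the standard shift argument: the case $g \in P$ is immediate from the definition of $P^\perp$, and for $g \notin P$ the identity $S = (h_0 \bullet g)S$ reduces everything to producing a character in $P^\perp$ that is nontrivial on $g$, which you correctly obtain from the identification $P^\perp \cong (G/P)^\vee$ together with the fact that characters of a finite abelian group separate points. That last fact is exactly the right place to locate the content of the lemma, and invoking it at the level of a cited standard reference is consistent with how the paper treats this material.
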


This lemma implies the Poisson summation formula and the Fourier inversion formula.
\begin{theorem}[Poisson summation formula]\label{thm:Poisson}
$$
\frac{1}{|P|}\sum_{g \in P}{f(g)}
= \sum_{h \in P^{\perp}}{\widehat{f}(h)}.
$$
\end{theorem}

\begin{proof}
\begin{align*}
\sum_{h \in P^{\perp}}{\widehat{f}(h)}
&= \sum_{h \in P^{\perp}} \frac{1}{|G|} {\sum_{g \in G}{f(g) (h \bullet g)}}\\
&= \sum_{g \in G} \frac{1}{|G|}f(g) \sum_{h \in P^{\perp}}  {h \bullet g}\\
&= \frac{1}{|G|} \sum_{g \in P} f(g) \cdot |P^\perp|
\quad(\because \text{Lemma}~\ref{lem:cyclicsum})\\
&= \frac{1}{|P|}\sum_{g \in P} f(g). \qed
\end{align*}
\end{proof}

\begin{theorem}[Fourier inversion formula]\label{thm:Fourier inversion}
For a complex-valued function $f \colon G \to \bC$, we have
$
f(g) = \sum_{h \in G^\vee}\widehat{f}(-h)(h \bullet g)
$
for any $g \in G$.
Moreover, if $f$ is real-valued, we have
$
f(g) = \sum_{h \in G^\vee}\overline{\widehat{f}(h)}(h \bullet g).
$
\end{theorem}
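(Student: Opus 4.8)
The plan is to substitute the definition of the discrete Fourier transform into the right-hand side and reduce everything to the orthogonality relation for characters, namely Lemma~\ref{lem:cyclicsum}. Writing $\widehat{f}(-h) = \frac{1}{|G|}\sum_{g' \in G} f(g')\bigl((-h) \bullet g'\bigr)$ and noting that $(-h)\bullet g' = \overline{h \bullet g'}$ since $h \bullet g' \in T$, the right-hand side becomes a double sum over $h \in G^\vee$ and $g' \in G$. Interchanging the order of summation and using the multiplicativity $(h \bullet g)\,\overline{(h \bullet g')} = h \bullet (g - g')$, the expression collapses to
\[
\frac{1}{|G|}\sum_{g' \in G} f(g') \sum_{h \in G^\vee}\bigl(h \bullet (g - g')\bigr).
\]

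The key step is to evaluate the inner sum $\sum_{h \in G^\vee} h \bullet (g-g')$. I would obtain this from Lemma~\ref{lem:cyclicsum} by taking $P$ to be the trivial subgroup $\{0\} \subset G$: then $P^\perp = G^\vee$ and $|P^\perp| = |G|$, so the lemma yields $|G|$ when $g - g' = 0$ and $0$ otherwise. Substituting this back, only the term $g' = g$ survives and the factor $|G|$ cancels the $1/|G|$, leaving exactly $f(g)$. This proves the first formula.

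For the real-valued case it suffices to check that $\widehat{f}(-h) = \overline{\widehat{f}(h)}$ whenever $f$ is real. This follows from $\overline{\widehat{f}(h)} = \frac{1}{|G|}\sum_{g' \in G}\overline{f(g')}\;\overline{(h \bullet g')} = \frac{1}{|G|}\sum_{g' \in G} f(g')\bigl((-h)\bullet g'\bigr) = \widehat{f}(-h)$, where we used $\overline{f(g')} = f(g')$ and $\overline{h\bullet g'} = (-h)\bullet g'$. The second formula is then immediate from the first.

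The computation is entirely routine; the only point requiring care is bookkeeping with the sign convention, since the absence of a minus sign in the definition of $\widehat{f}$ means the inversion must read off the conjugate character $-h$. The single genuine idea is to recognize the inner character sum as the instance of Lemma~\ref{lem:cyclicsum} with the trivial subgroup, rather than the \emph{primal} form used in the Poisson summation formula.
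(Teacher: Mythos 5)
Your proposal is correct and follows essentially the same route as the paper: substitute the definition of $\widehat{f}(-h)$, interchange the sums, and collapse the inner character sum $\sum_{h \in G^\vee} h \bullet (g-g')$ via Lemma~\ref{lem:cyclicsum} applied with the trivial subgroup (so that $P^\perp = G^\vee$), then deduce the real-valued case from $\widehat{f}(-h) = \overline{\widehat{f}(h)}$. The only difference is that you spell out the verification of $\widehat{f}(-h) = \overline{\widehat{f}(h)}$, which the paper asserts without detail.
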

\begin{proof}
By Lemma~\ref{lem:cyclicsum}, we have $\sum_{h \in G^\vee}h \bullet g = 0$ if $g \neq 0$
and $\sum_{h \in G^\vee}h \bullet g = |G|$ if $g = 0$. Thus we have
\begin{align*}
\sum_{h \in G^\vee}\widehat{f}(-h)(h \bullet g)
&= \sum_{h \in G^\vee}\frac{1}{|G|}\sum_{g' \in G}f(g')((-h) \bullet g')(h \bullet g) \\
&= \frac{1}{|G|}\sum_{g' \in G}f(g') \sum_{h \in G^\vee}(h \bullet (g - g'))\\
&= f(g),
\end{align*}
which proves the complex-valued case.
If $f$ is real-valued, we have $\widehat{f}(-h) = \overline{\widehat{f}(h)}$,
and thus the complex-valued case implies the real-valued case.
\qed
\end{proof}

\subsection{Walsh functions}
In this subsection, we recall the notion of Walsh functions and Walsh coefficients,
and see the relationship between Walsh coefficients and discrete Fourier coefficients.
As a corollary, we prove that
the $n$-digit discrete approximation $f_n$ of $f$ is
essentially equal to the appropriate approximation of the Walsh series of $f$.
We refer to \cite[Appendix A]{Dick2010dna} for general information on Walsh functions.

First, we define Walsh functions for the one dimensional case.
\begin{definition}
Let $k \in \bN_0$ with $b$-adic expansion
$k = \kappa_0 + \kappa_1 b^1 + \kappa_2 b^2 + \cdots$
(this expansion is actually finite),
where $\kappa_j \in \{0,1, \dots, b-1\}$ for all $j \in \bN_0$.
The $k$-th $b$-adic Walsh function
${}_b \mathrm{wal}_k \colon [0,1) \to \{0, \omega_b, \dots, \omega_b^{b-1} \}$
is defined as
$$
{}_b \mathrm{wal}_k(x) := \omega_b^{\kappa_0 x_1 + \kappa_1 x_2 + \cdots},
$$
for $x \in [0,1)$ with $b$-adic expansion
$x = x_1 b^{-1} + x_2 b^{-2} + x_3 b^{-3} + \cdots$ with $x_j \in \{0,1, \dots, b-1\}$,
which is unique in the sense that infinitely many of the $x_j$ must be
different from $b-1$.
\end{definition}

This definition is generalized to the higher-dimensional case.
\begin{definition}
For dimension $s \geq 1$,
let $\bk = (k_1, \dots, k_s) \in \bN_0^s$. 
The $\bk$-th $b$-adic Walsh function
${}_b \mathrm{wal}_\bk \colon [0,1)^s \to \{0, \omega_b, \dots, \omega_b^{b-1} \}$
is defined as
$$
{}_b \mathrm{wal}_\bk (\bx) = \prod_{i=1}^{s}{{}_b \mathrm{wal}_{k_i} (x_i)}.
$$
for $\bx = (x_1, \dots, x_s) \in [0,1)^s$.
\end{definition}

Walsh coefficients are defined as follows.
\begin{definition}
Let $f \colon [0,1)^s \to \bR$. 
The $\bk$-th $b$-adic Walsh coefficient of $f$ is defined as
$$
\mathcal{F}(f)(\bk)  :=
\int_{[0,1)^s}{f(\bx) \overline{{}_b \mathrm{wal}_\bk (\bx)} \, d\bx}.
$$
\end{definition}

We see the relationship between
Walsh coefficients and discrete Fourier coefficients
in the following.
Let $A = (a_{i,j}) \in \bZ_b^{s \times n}$.
We define maps $\phi_i \colon \bZ_b^{s \times n} \to \bN_0$
as $\phi_i(A) = \sum_{j=1}^n{a_{i,j}b^{j-1}}$
and $\phi \colon \bZ_b^{s \times n} \to \bN_0^s$
as $\phi(A) = (\phi_1(A), \dots, \phi_s(A))$.
Note that $\phi_i(A) < b^n$ holds
for all $1 \leq i \leq s$ and  $A \in \bZ_b^{s \times n}$.

\begin{lemma}\label{lem:Walsh}
Let $f \colon [0,1)^s \to \bR$ and
$A = (a_{i,j}) \in \bZ_b^{s \times n}$.
Then we have
$$
\overline{\mathcal{F}(f)(\phi(A))} = \widehat{f_n}(A).
$$
\end{lemma}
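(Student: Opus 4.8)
The plan is to unfold both sides against the partition of $[0,1)^s$ into the $b^{sn}$ half-open cubes $\mathbf{I}_B$ indexed by $B \in \bZ_b^{s\times n}$, and to verify that the Walsh function ${}_b\mathrm{wal}_{\phi(A)}$ is constant on each such cube with value exactly the pairing $A \bullet B$. First I would pin down the pairing on $G = \bZ_b^{s\times n}$: since the character group of a finite product is the product of the character groups, the identification of $\bZ_b^\vee$ with $\bZ_b$ via $h \bullet g = \omega_b^{hg}$ extends coordinatewise to
$$
A \bullet B = \omega_b^{\sum_{i=1}^s \sum_{j=1}^n a_{i,j} b_{i,j}}.
$$

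Next I would expand the right-hand side. Using $\mathrm{Vol}(\mathbf{I}_B) = b^{-sn}$ and $|G| = b^{sn}$, the factor $1/\mathrm{Vol}(\mathbf{I}_B)$ hidden in the definition of $f_n$ cancels the normalizing $1/|G|$ in the discrete Fourier transform, leaving
$$
\widehat{f_n}(A) = \sum_{B \in \bZ_b^{s\times n}} (A \bullet B)\int_{\mathbf{I}_B} f(\bx)\,d\bx.
$$
For the left-hand side, since $f$ is real-valued we have $\overline{\mathcal{F}(f)(\phi(A))} = \int_{[0,1)^s} f(\bx)\,{}_b\mathrm{wal}_{\phi(A)}(\bx)\,d\bx$, and because the cubes $\mathbf{I}_B$ partition $[0,1)^s$ this splits as $\sum_B \int_{\mathbf{I}_B} f(\bx)\,{}_b\mathrm{wal}_{\phi(A)}(\bx)\,d\bx$.

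The heart of the argument, and the step demanding the most care, is to show that ${}_b\mathrm{wal}_{\phi(A)}$ is constant on each $\mathbf{I}_B$ with value $A \bullet B$. The subtlety is the index shift between the two $b$-adic expansions: the weight $b^{j-1}$ in $\phi_i(A) = \sum_{j=1}^n a_{i,j} b^{j-1}$ forces its digits to be $\kappa_{j-1} = a_{i,j}$ for $j = 1,\dots,n$ (with $\kappa_\ell = 0$ for $\ell \geq n$), whereas the definition of ${}_b\mathrm{wal}_k$ pairs $\kappa_{m-1}$ with the $m$-th digit of the argument. Every point $\bx \in \mathbf{I}_B$ has $m$-th digit equal to $b_{i,m}$ in coordinate $i$ for $1 \leq m \leq n$ (up to boundary points of measure zero, irrelevant to the integral), while the vanishing of $\kappa_\ell$ for $\ell \geq n$ makes the higher, free digits of $\bx$ contribute nothing. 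Hence
$$
{}_b\mathrm{wal}_{\phi(A)}(\bx) = \prod_{i=1}^s \omega_b^{\sum_{m=1}^n a_{i,m} b_{i,m}} = A \bullet B
\qquad \text{for all } \bx \in \mathbf{I}_B.
$$

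Substituting this constant value into the split integral gives $\sum_B (A \bullet B)\int_{\mathbf{I}_B} f(\bx)\,d\bx$, which is exactly the expansion of $\widehat{f_n}(A)$ obtained above, completing the proof. I expect the only genuine obstacle to be the bookkeeping of the digit indices in this last step; the rest is the cancellation of normalizing constants together with the elementary fact that the cubes $\mathbf{I}_B$ tile $[0,1)^s$.
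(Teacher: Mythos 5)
Your proof is correct and follows essentially the same route as the paper: split both sides over the partition of $[0,1)^s$ into the cubes $\mathbf{I}_B$, observe that ${}_b\mathrm{wal}_{\phi(A)}$ is constant on each $\mathbf{I}_B$ with value equal to the pairing of $A$ with $B$, and match the normalizing constants $\mathrm{Vol}(\mathbf{I}_B)=b^{-sn}=1/|G|$. Your extra care with the digit-index bookkeeping is a slightly more explicit version of the identity the paper states in one line.
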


\begin{proof}
Since $\phi_i(A) < b^n$ holds for all $1 \leq i \leq s$,
for all $\bx = (x_1, \dots, x_s) \in \mathbf{I}_{B}$ we have
$$
{}_b \mathrm{wal}_{\phi(A)} (\bx) = \prod_{i=1}^{s}{{}_b \mathrm{wal}_{\phi_i(A)} (x_i)}
=\prod_{i=1}^{s}{\omega_b^{a_{i,1}b_{i,1} + \dots + a_{i,n}b_{i,n}}}
= B \bullet A.
$$
Therefore we have
\begin{align*}
\overline{\mathcal{F}(f)(\phi(A))}
&= \int_{[0,1)^s}{f(\bx){}_b \mathrm{wal}_{\phi(A)} (\bx) \, d\bx}\\
&= \sum_{B \in \bZ_b^{s \times n}}
	{\int_{\mathbf{I}_{B}}{f(\bx){}_b \mathrm{wal}_{\phi(A)} (\bx) \, d\bx}}\\
&=  \sum_{B \in \bZ_b^{s \times n}}
	{\int_{\mathbf{I}_{B}}{f(\bx) (B \bullet A) \, d\bx}}\\
&= \sum_{B \in \bZ_b^{s \times n}}
	{(B \bullet A) \int_{\mathbf{I}_{B}}{f(\bx) \, d\bx}}\\
&=  \sum_{B \in \bZ_b^{s \times n}}
	{(B \bullet A) \cdot {\mathrm{Vol}(\mathbf{I}_{B})}f_n(B)}\\
&=  \sum_{B \in \bZ_b^{s \times n}}
	{(B \bullet A) \cdot b^{-sn}f_n(B)}
= \widehat{f_n}(A),
\end{align*}
which proves the lemma.\qed
\end{proof}

Let $f \sim \sum_{\bk \in \bN_0^s}\mathcal{F}(f)(\bk) {}_b\mathrm{wal}_k$
be the Walsh expansion of a real valued function $f \colon [0,1)^s \to \bR$.
Lemma~\ref{lem:Walsh} implies that
considering $n$-digit discrete approximation $f_n$ of $f$ is the same as
considering the Walsh polynomial
$\sum_{\bk < b^n}\mathcal{F}(f)(\bk) \cdot {}_b\mathrm{wal}_k$,
where $\bk = (k_1, \dots, k_s) < b^n$ means that
$k_i < b^n$ holds for every $i = 1, \dots, s$, namely we have the following.
\begin{proposition}
Let $f \colon [0,1)^s \to \bR$.
For $B \in \bZ_b^{s \times n}$, we have
$f_n(B) = \sum_{\bk < b^n}\mathcal{F}(f)(\bk) {}_b\mathrm{wal}_k (\bx_B)$.
\end{proposition}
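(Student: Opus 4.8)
The plan is to obtain the formula by applying the Fourier inversion formula to the real-valued function $f_n$ and then converting its discrete Fourier coefficients into Walsh coefficients via Lemma~\ref{lem:Walsh}. First I would take $G = \bZ_b^{s \times n}$ and apply the real-valued case of Theorem~\ref{thm:Fourier inversion} to $f_n \colon \bZ_b^{s \times n} \to \bR$, identifying $(\bZ_b^{s \times n})^\vee$ with $\bZ_b^{s \times n}$ through the pairing $\bullet$. This gives
\[
f_n(B) = \sum_{A \in \bZ_b^{s \times n}} \overline{\widehat{f_n}(A)}\,(A \bullet B).
\]

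Next I would rewrite each of the two factors. On the one hand, Lemma~\ref{lem:Walsh} states $\widehat{f_n}(A) = \overline{\mathcal{F}(f)(\phi(A))}$, so $\overline{\widehat{f_n}(A)} = \mathcal{F}(f)(\phi(A))$. On the other hand, the pairing is symmetric, since $B \bullet A$ and $A \bullet B$ are defined by the same product over the exponents $a_{i,j}b_{i,j}$; and the computation in the proof of Lemma~\ref{lem:Walsh} shows ${}_b\mathrm{wal}_{\phi(A)}(\bx) = B \bullet A$ for every $\bx \in \mathbf{I}_B$. As the point $\bx_B$ lies in $\mathbf{I}_B$, this yields $A \bullet B = {}_b\mathrm{wal}_{\phi(A)}(\bx_B)$. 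Substituting both identities into the inversion formula produces
\[
f_n(B) = \sum_{A \in \bZ_b^{s \times n}} \mathcal{F}(f)(\phi(A))\, {}_b\mathrm{wal}_{\phi(A)}(\bx_B).
\]

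Finally I would reindex the sum along the bijection $\phi$. Each row map $\phi_i(A) = \sum_{j=1}^{n} a_{i,j} b^{j-1}$ is a bijection from $\bZ_b^{n}$ onto $\{0, 1, \dots, b^n - 1\}$ by uniqueness of the $b$-adic expansion, so $\phi$ is a bijection from $\bZ_b^{s \times n}$ onto $\{\bk \in \bN_0^s \mid \bk < b^n\}$. Setting $\bk = \phi(A)$ therefore turns the previous display into $f_n(B) = \sum_{\bk < b^n} \mathcal{F}(f)(\bk)\, {}_b\mathrm{wal}_{\bk}(\bx_B)$, which is the assertion. The argument is entirely bookkeeping and carries no real difficulty; the only point demanding care is keeping the complex conjugations consistent and checking that the symmetric pairing used to identify the character group is exactly the one appearing in both Theorem~\ref{thm:Fourier inversion} and Lemma~\ref{lem:Walsh}, so that the two substitutions are legitimate.
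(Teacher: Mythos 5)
Your proof is correct and follows exactly the paper's route: apply the real-valued Fourier inversion formula to $f_n$, convert the discrete Fourier coefficients to Walsh coefficients via Lemma~\ref{lem:Walsh}, and reindex along the bijection $\phi$. You only spell out a few points the paper leaves implicit (the symmetry of the pairing $A \bullet B = B \bullet A$ and the bijectivity of $\phi$ onto $\{\bk < b^n\}$), which is harmless.
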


\begin{proof}
\begin{align*}
f_n(B)
&= \sum_{A \in \bZ_b^{s \times n}}\overline{\widehat{f_n}(A)} B \bullet A \text{  ($\because$ Theorem~\ref{thm:Fourier inversion})}\\
&= \sum_{A \in \bZ_b^{s \times n}} \mathcal{F}(f)(\phi(A)) {}_b\mathrm{wal}_{\phi(A)} (\bx_B) \text{  ($\because$ Lemma~\ref{lem:Walsh})}\\
&=\sum_{\bk < b^n}\mathcal{F}(f)(\bk) {}_b\mathrm{wal}_k (\bx_B). \qed
\end{align*}
\end{proof}

\section{WAFOM over a finite abelian group}\label{sec:WAFOM}
In this section, we expand the notion of WAFOM defined in \cite{Matsumoto2014acf},
more precisely, we define WAFOM over a finite abelian group with $b$ elements.

First, we evaluate the $n$-th discretized QMC integration error of $f$
with its discrete Fourier coefficients.
Let $P \subset \bZ_b^{s \times n}$ be a subgroup.
We have
$I(f) = \widehat{f_n}(O)$
by the definition of the discrete Fourier inversion, and we have
$I_{P,n}(f) = \sum_{A \in P^\perp}{\widehat{f_n}(A)}$
by the Poisson summation formula (Theorem~\ref{thm:Poisson}).
Hence we have
$$
\mathrm{Err}(f;P,n) = |I_{P,n}(f)-I(f)| =
\left|\sum_{A \in P^\perp \backslash \{O\}}{\widehat{f_n}(A)}\right|
\leq  \sum_{A \in P^\perp \backslash \{O\}}{|\widehat{f_n}(A)|},
$$
and thus we would like to bound the value $|\widehat{f_n}(A)|$.
Dick gives an upper bound of the $\bk$-th $b$-adic Walsh coefficient $\mathcal{F}(f)(\mathbf{k})$ for
$n$-smooth function $f$ (for the definition of $n$-smoothness,
see \cite{Dick2008wsc} or \cite[\S14]{Dick2010dna}).

\begin{theorem}[\cite{Dick2010dna}, Theorem~14.23]\label{thm:Dick}
There is a constant $C_{b,s,n}$ depending only on $b, s$ and $n$ such that
for any $n$-smooth function $f \colon [0,1)^s \to \bR$ and
any $\mathbf{k} \in \bN^s$ it holds that 
$$
|\mathcal{F}(f)(\mathbf{k})| \leq C_{b,s,n} \|f\|_n \cdot b^{-\mu_n (\bk)},
$$ 
where
$\|f\|_n$ is a norm of $f$ for a Sobolev space and
$\mu_n (\bk)$ is the $n$-weight of $\bk$, which are defined in
\cite[(14.6) and Theorem~14.23]{Dick2010dna}
(we do not define them here).
\end{theorem}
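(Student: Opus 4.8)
The plan is to reduce to the one-dimensional case and then to establish the decay of a single Walsh coefficient by a recursive integration-by-parts argument (equivalently, by repeated Taylor expansion combined with the character cancellation $\sum_{m=0}^{b-1}\omega_b^{\kappa m}=0$ valid for $\kappa\neq 0$). The multidimensional statement should then assemble from the one-dimensional estimate. Concretely, since ${}_b\mathrm{wal}_\bk(\bx)=\prod_{i=1}^s {}_b\mathrm{wal}_{k_i}(x_i)$, since $\mu_n(\bk)=\sum_{i=1}^s \mu_n(k_i)$, and since the $n$-smooth Sobolev norm is built out of mixed partial derivatives of order at most $n$ in each variable, I would integrate one coordinate at a time (Fubini). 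Applying the one-dimensional bound in coordinate $i$ produces a factor $b^{-\mu_n(k_i)}$ and replaces the integrand by a function whose remaining relevant derivatives are still controlled by $\|f\|_n$; iterating over $i=1,\dots,s$ gives $b^{-\sum_i \mu_n(k_i)}=b^{-\mu_n(\bk)}$, with a constant $C_{b,s,n}$ absorbing the $s$ coordinate passes. Thus the crux is entirely the one-dimensional decay estimate.

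For the one-dimensional core, fix $k\in\bN$ whose nonzero $b$-adic digits sit at positions $a_1>a_2>\dots>a_\nu\ge 1$ (so the top nonzero digit is the coefficient of $b^{a_1-1}$). By the definition of ${}_b\mathrm{wal}_k$ the function is constant on every $b$-adic interval of length $b^{-a_1}$. I would group these fine intervals into blocks of $b$ consecutive ones forming a coarser interval of length $b^{-a_1+1}$; across such a block the top digit makes the relevant factor run through $\omega_b^{\kappa\,m}$ for $m=0,\dots,b-1$. Expanding $f$ to first order about the left endpoint of each block, the constant Taylor term is annihilated by the cancellation identity, while the linear term contributes a factor of size $b^{-a_1}$ multiplied by a quantity governed by $f'$. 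Made rigorous (carefully tracking the Taylor remainder and the behaviour at interval boundaries), this yields a bound of the shape
$$
|\mathcal{F}(f)(k)| \le c_b\, b^{-a_1}\, \sup_{x}|f'(x)|\text{-type quantity},
$$
which in effect strips off the top digit of $k$ and replaces $f$ by a derivative.

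I would then iterate: the next step removes the digit at $a_2$ at the cost of another factor $b^{-a_2}$ and one further derivative of $f$, and so on. Two cases arise. If $\nu\le n$, after $\nu$ steps all digits are exhausted and I collect $b^{-(a_1+\cdots+a_\nu)}=b^{-\mu_n(k)}$ with a bound involving $f^{(\nu)}$, hence $\le C\|f\|_n$. If $\nu>n$, I stop after exactly $n$ steps, having used up the available smoothness; I then have the factor $b^{-(a_1+\cdots+a_n)}=b^{-\mu_n(k)}$ and bound the remaining integral trivially by $\|f\|_n$ times the modulus of the leftover Walsh factor, which is at most $1$. In either case $|\mathcal{F}(f)(k)|\le C\,\|f\|_n\, b^{-\mu_n(k)}$, and feeding this into the coordinatewise reduction of the first paragraph gives the claimed inequality.

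The main obstacle I expect lies precisely in the one-dimensional recursion: the antiderivatives of the piecewise-constant Walsh functions are not themselves Walsh functions, so the integration-by-parts (or Taylor-with-cancellation) step requires a careful auxiliary analysis of these primitives and of the remainder terms, and one must verify that the argument is uniform over all block positions so that the constant does not degrade as the recursion proceeds. Matching the derivatives that appear at each stage to the precise definition of the $n$-smooth norm $\|f\|_n$ is the other delicate bookkeeping point. By contrast, once the sharp one-dimensional decay with exponent $\mu_n(k)$ is in hand, the passage to dimension $s$ is routine.
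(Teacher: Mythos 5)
The paper does not actually prove this statement: it is imported verbatim from Dick and Pillichshammer's book (Theorem~14.23 there), with even the definitions of $\|f\|_n$ and $\mu_n$ deferred to that reference. So the only meaningful comparison is against Dick's original argument, and your outline does follow its architecture: reduce to dimension one via Fubini and the product structure of ${}_b\mathrm{wal}_{\bk}$ and of the norm, then strip the nonzero digits of $k$ one at a time, trading the digit at position $a_j$ for a factor $b^{-a_j}$ and one derivative of $f$, stopping after $\min(\nu,n)$ steps.

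There is, however, a genuine gap in the one-dimensional recursion, and it is not merely the bookkeeping you flag. Your first step is fine: grouping the $b$-adic intervals of length $b^{-a_1}$ into blocks of $b$, the character cancellation kills the constant Taylor term and yields $|\mathcal{F}(f)(k)|\le c_b\,b^{-a_1}\sup|f'|$. But after this step you are left with a Taylor-remainder integral, not a Walsh coefficient of $f'$ at the truncated frequency, so the block argument gives you nothing that produces the next factor $b^{-a_2}$; as written, the iteration does not run. What makes it run in Dick's proof is integration by parts, $\mathcal{F}(f)(k)=-\int_0^1 f'(x)\overline{J_k(x)}\,dx$ with $J_k(x)=\int_0^x {}_b\mathrm{wal}_k(t)\,dt$ (using $J_k(1)=0$ for $k\ge 1$), combined with a structural lemma expressing $J_k$ explicitly as a finite linear combination of Walsh functions --- one at the frequency obtained by deleting the top digit of $k$, and $b-1$ at frequencies obtained by altering that top digit --- all with coefficients of modulus $O(b^{-a_1})$. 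This is exactly the ``careful auxiliary analysis of the primitives'' you defer, but it is the key idea rather than a technicality: it is what converts the one-shot estimate into a recursion and justifies the accumulated factor $b^{-(a_1+\cdots+a_{\min(\nu,n)})}=b^{-\mu_n(k)}$. Granting that lemma, your case split $\nu\le n$ versus $\nu>n$, the matching with $\|f\|_n$, and the coordinatewise assembly in dimension $s$ (legitimate here since $\bk\in\bN^s$ has all components nonzero) are indeed routine.
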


Instead of $\mu_n$, we define the Dick weight $\mu$
on dual groups of general finite abelian groups below, which is a generalization of
the Dick weight over $\Ftwo$ defined in \cite{Matsumoto2014acf}.
Actually, $\mu$ is a special case of $\mu_n \circ \phi$.
More precisely, if $G = \bZ_b$ and $\alpha \geq n$ hold,
then we have $\mu = \mu_\alpha \circ \phi$ as a function from
$( \bZ_b^\vee)^{s \times n} (\simeq \bZ_b^{s \times n})$ to $\bN_0$.

\begin{definition}
Let $G$ be a finite abelian group
and let $A \in (G^\vee)^{s \times n}$.
The Dick weight $\mu \colon  (G^\vee)^{s \times n} \to \bN_0$
is defined as
$$
\mu(A):=\sum_{i,j} j\times \delta(a_{i,j}),
$$
with 
$\delta(h)=0$ for $h=0$ and 
$\delta(h)=1$ for $h\neq 0$.
\end{definition}

We obtain the next corollary.

\begin{corollary}
There exists a constant $C_{b,s,n}$ depending only on $b, s$ and $n$ such that
for any $n$-smooth function $f \colon [0,1)^s \to \bR$
and any $A \in (\bZ_b)^{s \times n}$
it holds that
$$
|\widehat{f}_n(A)| \leq C_{b,s,n} \|f\|_n \cdot b^{-\mu (A)}.
$$ 
\end{corollary}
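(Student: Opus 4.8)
The plan is to reduce the claim to Dick's Walsh-coefficient bound (Theorem~\ref{thm:Dick}) through the identification of discrete Fourier coefficients with Walsh coefficients from Lemma~\ref{lem:Walsh}, using the fact recorded just before the Dick-weight definition that $\mu = \mu_n \circ \phi$ on $(\bZ_b^\vee)^{s \times n}$ (the case $G = \bZ_b$, $\alpha = n$).

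First I would pass from the discrete Fourier coefficient to the Walsh coefficient: Lemma~\ref{lem:Walsh} gives $\widehat{f_n}(A) = \overline{\mathcal{F}(f)(\phi(A))}$, and since conjugation preserves modulus, $|\widehat{f_n}(A)| = |\mathcal{F}(f)(\phi(A))|$. Next I would apply Theorem~\ref{thm:Dick} with $\bk = \phi(A)$ to bound this by $C_{b,s,n}\|f\|_n \, b^{-\mu_n(\phi(A))}$. Finally, invoking $\mu_n(\phi(A)) = \mu(A)$ rewrites the exponent as $-\mu(A)$, giving exactly the asserted inequality with the same constant.

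The one delicate point, which I expect to be the main obstacle, is that Theorem~\ref{thm:Dick} is stated for $\bk \in \bN^s$, whereas $\phi(A)$ may have vanishing coordinates --- precisely those $i$ for which the $i$-th row of $A$ is zero, the extreme case $A = O$ reducing to the trivial bound on $I(f) = \widehat{f_n}(O)$. To bridge this I would argue by dimension reduction: writing $Z = \{i : \phi_i(A) = 0\}$, we have ${}_b\mathrm{wal}_{\phi_i(A)}(x_i) \equiv 1$ for $i \in Z$, so integrating out those variables first realizes $\mathcal{F}(f)(\phi(A))$ as the Walsh coefficient, at the strictly positive index formed by the nonzero coordinates of $\phi(A)$, of the partial integral of $f$ over the variables indexed by $Z$. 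That partial integral is an $n$-smooth function on $[0,1)^{s-|Z|}$ whose Sobolev norm is controlled by $\|f\|_n$, so Theorem~\ref{thm:Dick} applies to it in dimension $s-|Z|$; since the zero coordinates contribute nothing to $\mu_n$, the exponent is still $-\mu_n(\phi(A)) = -\mu(A)$, and taking the maximum of the finitely many dimension-dependent constants yields a single $C_{b,s,n}$.
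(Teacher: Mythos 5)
Your argument is exactly the paper's proof: the paper disposes of the corollary in one line by citing Theorem~\ref{thm:Dick}, Lemma~\ref{lem:Walsh}, and the identity $\mu = \mu_n \circ \phi$, which is precisely the chain you set up. The only difference is that you additionally patch the mismatch between $\bk \in \bN^s$ in the statement of Theorem~\ref{thm:Dick} and the possibly vanishing coordinates of $\phi(A)$; the paper silently ignores this (the bound in \cite[Theorem~14.23]{Dick2010dna} does cover zero coordinates with the convention $\mu_n(0)=0$), and your dimension-reduction argument is a correct, if unnecessary, way to close that gap.
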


\begin{proof}
This is the direct corollary of Theorem~\ref{thm:Dick}, Lemma~\ref{lem:Walsh},
and the equality $\mu(A) = \mu_n \circ \phi(A)$.\qed
\end{proof}

By the above corollary,
we have a bound on the $n$-th discretized QMC integration error
\begin{align*}\label{eq:generalKHineq}
\mathrm{Err}(f;P,n) := |I(f)-I_{P,n}(f)|
\leq C_{b,s,n} \|f\|_n \times \sum_{A \in P^\perp \backslash \{O\}}{b^{-\mu (A)}},
\end{align*}
for a subgroup $P$ of $\bZ_b^{s \times n}$.

Hence, as a generalization of \cite{Matsumoto2014acf}, we define a kind of figure of merit
(the Walsh figure of merit or WAFOM). 

\begin{definition}\label{def:WAFOM}
Let $s,n$ be positive integers.
Let $G$ be a finite abelian group with $b$ elements.
Let $P \subset G^{s \times n}$ be a subgroup of $ G^{s \times n}$.
We define the Walsh figure of merit of $P$ by
$$
\WAFOM(P) := \sum_{A \in P^\perp \backslash \{O\}}{b^{-\mu (A)}}.
$$
\end{definition}
In order to stress the role of the precision $n$,
we sometimes denote $\WAFOM^n(P)$ instead of $\WAFOM(P)$.

Then, as we have seen, we have the Koksma-Hlawka type inequality
$$
\mathrm{Err}(f;P,n) := |I(f)-I_{P,n}(f)|
\leq C_{b,s,n} \|f\|_n \times \WAFOM(P)
$$
for a subgroup $P \subset \bZ_b^{s \times n}$.
This shows that $\WAFOM(P)$ is a quality measure of the point set $P$
for quasi-Monte Carlo integration when $G=\bZ_b$.

\section{MacWilliams identity over an abelian group}\label{sec:MacWilliams identity}
In this section, we assume that $s, n$ are positive integers.
Recall that $G$ is a finite abelian group and $G^\vee$ its character group.
We consider an abelian group $G^{s \times n}$. 
Let $P\subset G^{s \times n}$ be a
subgroup.

We are interested in the weight enumerator polynomial of $P^\perp$
$$
W_{P^\perp}(x,y):=
\sum_{A \in P^\perp} x^{M-\mu(A)}y^{\mu(A)} \in \bC[x,y],
$$
where $M:=n(n+1)s/2$.

Let $R:=\bC[x_{i,j}(h)]$, where $x_{i,j}(h)$ is 
a family of indeterminates for $1\leq i \leq s$,
$1\leq j \leq n$, and $h \in G^\vee$. 
We define functions
$f_{i,j} \colon G^\vee \to R$ as
$f_{i,j}(h) = x_{i,j}(h)$
and
$f \colon (G^{s \times n})^\vee = (G^\vee)^{s \times n} \to R$
as
$$
f(A):=\prod_{\substack{1\leq i\leq s \\ 1\leq j\leq n}} f_{i,j}(a_{i,j})
= \prod_{\substack{1\leq i\leq s \\ 1\leq j\leq n}} x_{i,j}(a_{i,j}).
$$
Now the complete weight enumerator polynomial of $P^\perp$, in a
standard sense \cite[Chapter 5]{MacWilliams1977tec},
is defined by
$$
GW_{P^\perp}(x_{i,j}(h))
:=\sum_{A \in P^\perp} \prod_{\substack{1\leq i\leq s \\ 1\leq j\leq n}}x_{i,j}(a_{i,j}),
$$
and similarly, the complete weight enumerator polynomial of $P$
is defined by
$$
GW^*_{P}(x_{*i,j}(g))
:=\sum_{B \in P} \prod_{\substack{1\leq i\leq s \\ 1\leq j\leq n}} x_{*i,j}(b_{i,j})
$$
in $R^*:=\bC[x_{*i,j}(g)]$ where $x_{*i,j}(g)$ is 
a family of indeterminates for $1\leq i \leq s$, $1\leq j \leq n$, and $g \in G$.
We note that if we substitute
\begin{equation}\label{eq:subst}
x_{i,j}(0) \leftarrow x^j, \quad x_{i,j}(h) \leftarrow y^j
\mbox{ for }h\neq 0,
\end{equation}
we have an identity 
$$
GW_{P^\perp}(x_{i,j}(h))|_{\mbox{above substitution}}
=W_{P^\perp}(x,y).
$$

A standard formula of the Fourier transform
tells that, if 
$f_1 \colon G_1 \to R$, $f_2 \colon G_2 \to R$ are functions
and $f_1f_2 \colon G_1 \times G_2 \to R$ is their multiplication
at the value, then 
$$
\widehat{f_1f_2}=\widehat{f_1}\widehat{f_2}
$$
holds. 
This implies that 
$$
\widehat{f}(B) = \prod_{\substack{1\leq i\leq s \\ 1\leq j\leq n}}\widehat{f_{i,j}}(b_{i,j})
= \frac{1}{|G|^{sn}}\prod_{\substack{1\leq i\leq s \\ 1\leq j\leq n}}
	{\sum_{h \in G^\vee}{f_{i,j}(h)(h \bullet b_{i,j})}}.
$$
Hence, by the Poisson summation formula (Theorem~\ref{thm:Poisson}), we have
\begin{align*}
GW_{P^\perp}(x_{i,j}(h)) &= \sum_{A \in P^\perp}{f(A)}\\
&= |P^\perp| \sum_{B \in P}{\widehat{f}(B)}\\
&= \frac{1}{|P|} \prod_{\substack{1\leq i\leq s \\ 1\leq j\leq n}}
	{\sum_{h \in G^\vee}{f_{i,j}(h)(h \bullet b_{i,j})}}.
\end{align*}
Thus we have the MacWilliams identity below, which is a variant of
Generalized MacWilliams identity \cite[Chapter 5 \S6]{MacWilliams1977tec}:

\begin{proposition}[MacWilliams identity]
$$
GW_{P^\perp}(x_{i,j}(h))=
\frac{1}{|P|}
GW_{P}^*(\mbox{substituted}),
$$
where in the right hand side every $x_{*i,j}(g)$ is 
substituted by
$$
x_{*i,j}(g) \leftarrow \sum_{h \in G^\vee} (h \bullet g) x_{i,j}(h).
$$
\end{proposition}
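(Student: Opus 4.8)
The plan is to recognize the asserted identity as a repackaging of the Poisson summation formula (Theorem~\ref{thm:Poisson}) applied to the $R$-valued function $f$ on the group $H:=(G^\vee)^{s\times n}$, combined with the multiplicativity of the discrete Fourier transform. Since $GW_{P^\perp}(x_{i,j}(h))=\sum_{A\in P^\perp}f(A)$ holds by definition, it suffices to evaluate this sum and match it against the substituted $GW_P^*$.

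First I would set up the dualities. Under the canonical isomorphism $(G^\vee)^\vee\cong G$, the dual group of $H$ is $G^{s\times n}$, and biduality for finite abelian groups gives $(P^\perp)^\perp=P$ inside $G^{s\times n}$; moreover $|P^\perp|=|G|^{sn}/|P|$, by the same reasoning used to compute $|P^\perp|$ before Lemma~\ref{lem:cyclicsum}. Applying Theorem~\ref{thm:Poisson} to $f$ on $H$ with the subgroup $P^\perp$ then gives
\[
\sum_{A\in P^\perp}f(A)=|P^\perp|\sum_{B\in (P^\perp)^\perp}\widehat{f}(B)=|P^\perp|\sum_{B\in P}\widehat{f}(B).
\]

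Next I would exploit that $f$ is the external product of the single-entry functions $f_{i,j}$ over the $sn$ factors of $H=\prod_{i,j}G^\vee$. The multiplicativity $\widehat{f_1f_2}=\widehat{f_1}\widehat{f_2}$ over products of groups yields
\[
\widehat{f}(B)=\prod_{i,j}\widehat{f_{i,j}}(b_{i,j})=\frac{1}{|G|^{sn}}\prod_{i,j}\sum_{h\in G^\vee}(h\bullet b_{i,j})\,x_{i,j}(h).
\]
Substituting this into the Poisson expression and cancelling $|G|^{-sn}$ against $|P^\perp|=|G|^{sn}/|P|$ gives
\[
GW_{P^\perp}(x_{i,j}(h))=\frac{1}{|P|}\sum_{B\in P}\prod_{i,j}\Bigl(\sum_{h\in G^\vee}(h\bullet b_{i,j})\,x_{i,j}(h)\Bigr).
\]
Reading off the definition of $GW_P^*$ and performing the substitution $x_{*i,j}(g)\leftarrow\sum_{h\in G^\vee}(h\bullet g)\,x_{i,j}(h)$ entry by entry identifies the right-hand side with $\frac{1}{|P|}GW_P^*(\text{substituted})$, which is the claim.

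I expect the only delicate point to be the bookkeeping around the dualities: confirming that the annihilator of $P^\perp$, taken in the double dual $G^{s\times n}$, is $P$ itself, and that the pairing factors as $B\bullet A=\prod_{i,j}(a_{i,j}\bullet b_{i,j})$ under the identification $(G^{s\times n})^\vee=(G^\vee)^{s\times n}$. Once the pairings and the identification of $(P^\perp)^\perp$ with $P$ are fixed, the factorization of $\widehat{f}$ and the index juggling are routine; the one thing to watch is that the normalizing factor $|G|^{sn}$ cancels cleanly against $|P^\perp|$, leaving exactly $1/|P|$.
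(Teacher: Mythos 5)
Your proposal is correct and follows essentially the same route as the paper: apply the Poisson summation formula to the product function $f$ on $(G^\vee)^{s\times n}$ with the subgroup $P^\perp$ (using biduality $(P^\perp)^\perp = P$), factor $\widehat{f}$ via multiplicativity of the discrete Fourier transform, and cancel $|P^\perp|/|G|^{sn}=1/|P|$. Your extra care with the duality bookkeeping is sound; the paper takes those identifications for granted.
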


We consider specializations of this identity.
First, we consider a specialization
$\overline{GW}_{P^\perp}(x_{1},\dots, x_{n}, y_{1}, \dots, y_{n})$
of $GW_{P^\perp}(x_{i,j}(h))$ obtained by the substitution
$$
x_{i,j}(0) \leftarrow x_j, \quad x_{i,j}(h) \leftarrow y_j
\text{ for }h\neq 0.
$$
We have
\begin{align*}
\left.\sum_{h \in G^\vee} (h \bullet g) x_{i,j}(h) \right\rvert_{\text{above substitution}}
&=(0 \bullet g)x_j + \sum_{h \in G^\vee \backslash \{0\}} (h \bullet g) y_j\\
&=x_j - y_j + \sum_{h \in G^\vee} (h \bullet g) y_j\\
&= x_j - y_j +
\begin{cases}
by_j & \text{(if $g = 0$)}\\
0    & \text{(otherwise)}
\end{cases}\\
&=
\begin{cases}
x_j + (b-1)y_j & \text{(if $g = 0$)}\\
x_j -        y_j & \text{(otherwise)}
\end{cases},\\
\end{align*}
where we use Lemma~\ref{lem:cyclicsum} for the third equality.
Thus, we have the following formula.

\begin{corollary}\label{cor:spGW}
\begin{align*}
\overline{GW}_{P^\perp}(x_1, \dots, x_n,y_1, \dots, y_n)
&= \frac{1}{|P|} \sum_{B \in P}{\prod_{\substack{1\leq i\leq s \\ 1\leq j\leq n}}
	{ (x_j + \eta(b_{i,j})y_j) } },
\end{align*}
where $\eta(b_{i,j}) = b-1$ if $b_{i,j}=0$ and $\eta(b_{i,j}) = -1$ if $b_{i,j} \neq 0$. 
\end{corollary}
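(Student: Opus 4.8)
The plan is to obtain Corollary~\ref{cor:spGW} as a direct specialization of the MacWilliams identity (the preceding Proposition), reusing the substitution computation already carried out in the display immediately above the statement. There is essentially one moving part: composing the MacWilliams substitution with the specialization that defines $\overline{GW}_{P^\perp}$.

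First I would apply the substitution $x_{i,j}(0)\leftarrow x_j$ and $x_{i,j}(h)\leftarrow y_j$ (for $h\neq 0$) to both sides of the MacWilliams identity
$$
GW_{P^\perp}(x_{i,j}(h)) = \frac{1}{|P|}\, GW_P^*(\text{substituted}).
$$
By the definition of $\overline{GW}_{P^\perp}$, the left-hand side becomes $\overline{GW}_{P^\perp}(x_1,\dots,x_n,y_1,\dots,y_n)$, so it remains only to evaluate the right-hand side under the same substitution.

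Next I would unfold the right-hand side. Writing $GW_P^*(x_{*i,j}(g)) = \sum_{B\in P}\prod_{i,j} x_{*i,j}(b_{i,j})$ and recalling that the MacWilliams substitution replaces each $x_{*i,j}(g)$ by $\sum_{h\in G^\vee}(h\bullet g)\,x_{i,j}(h)$, the factor indexed by $(i,j)$ in the term for $B\in P$ is $\sum_{h\in G^\vee}(h\bullet b_{i,j})\,x_{i,j}(h)$. Since the $x_{i,j}(h)$ are commuting indeterminates, the specialization acts factor by factor, and the display preceding the corollary shows precisely that this factor evaluates to $x_j + (b-1)y_j$ when $b_{i,j}=0$ and to $x_j - y_j$ when $b_{i,j}\neq 0$, i.e.\ to $x_j + \eta(b_{i,j})y_j$. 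Collecting the product over $(i,j)$ and the sum over $B\in P$ then yields the asserted identity.

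The only genuine content lies in that inner evaluation, which rests on the orthogonality relation $\sum_{h\in G^\vee}(h\bullet g) = b$ for $g=0$ and $0$ otherwise (Lemma~\ref{lem:cyclicsum}); but since this was already established in the text, the argument is essentially a bookkeeping composition of two substitutions and presents no real obstacle. The one point I would state explicitly is that the two substitutions may be composed in either order because they act on a polynomial ring in commuting variables, so that ``substitute then form products and sums'' agrees with ``form products and sums then substitute''—this is what licenses passing the specialization inside the product and the sum over $B\in P$.
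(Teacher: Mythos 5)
Your proposal is correct and follows exactly the paper's route: the text preceding the corollary performs the same specialization of the MacWilliams identity, evaluates the inner sum $\sum_{h\in G^\vee}(h\bullet g)x_{i,j}(h)$ under the substitution to $x_j+\eta(g)y_j$ via Lemma~\ref{lem:cyclicsum}, and then reads off the product formula. The extra remark about composing substitutions in a commutative polynomial ring is a harmless elaboration of what the paper leaves implicit.
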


Second, we consider the specialization (\ref{eq:subst}) of $GW_{P^\perp}$.
We have already seen that
$
GW_{P^\perp}\mid_{(\text{substitution (\ref{eq:subst}}))} = W_{P^\perp}(x,y)
$
holds. Since
$$
W_{P^\perp}(x,y) = \overline{GW}_{P^\perp}(x^1, \dots, x^n,y^1, \dots, y^n)
$$
follows,
Corollary~\ref{cor:spGW} implies the following formula:

\begin{theorem}\label{thm:WEP}
\begin{align*}
W_{P^\perp}(x,y)
&= \frac{1}{|P|} \sum_{B \in P}{\prod_{\substack{1\leq i\leq s \\ 1\leq j\leq n}}
	{ (x^j + \eta(b_{i,j})y^j) } },
\end{align*}
where $\eta(b_{i,j}) = b-1$ if $b_{i,j}=0$ and $\eta(b_{i,j}) = -1$ if $b_{i,j} \neq 0$. 
\end{theorem}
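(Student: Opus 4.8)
The plan is to obtain Theorem~\ref{thm:WEP} as the one-variable specialization $x_j = x^j$, $y_j = y^j$ of Corollary~\ref{cor:spGW}, so the only real content is bookkeeping that links the three substitution schemes already in play.

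First I would pin down the bridging identity $W_{P^\perp}(x,y) = \overline{GW}_{P^\perp}(x^1,\dots,x^n,y^1,\dots,y^n)$. This is purely formal: the substitution defining $\overline{GW}_{P^\perp}$ sends $x_{i,j}(0) \mapsto x_j$ and $x_{i,j}(h) \mapsto y_j$ for $h \neq 0$, so feeding in $x_j = x^j$ and $y_j = y^j$ reproduces exactly substitution~(\ref{eq:subst}), under which $GW_{P^\perp}$ specializes to $W_{P^\perp}$.

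Next I would verify that this latter specialization is indeed $W_{P^\perp}$ by tracking the Dick weight term by term. For a fixed $A = (a_{i,j}) \in P^\perp$, substitution~(\ref{eq:subst}) replaces the factor $x_{i,j}(a_{i,j})$ by $x^j$ when $a_{i,j}=0$ (that is, $\delta(a_{i,j})=0$) and by $y^j$ when $a_{i,j}\neq 0$ (that is, $\delta(a_{i,j})=1$). Multiplying over all $(i,j)$ collects the exponent $\sum_{i,j} j\,\delta(a_{i,j}) = \mu(A)$ on $y$ and the complementary exponent $\sum_{i,j} j\,(1-\delta(a_{i,j})) = \bigl(\sum_{i,j} j\bigr) - \mu(A) = M - \mu(A)$ on $x$, using $M = n(n+1)s/2 = \sum_{1\le i\le s}\sum_{1\le j\le n} j$. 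Summing over $A \in P^\perp$ then gives precisely $\sum_A x^{M-\mu(A)}y^{\mu(A)} = W_{P^\perp}(x,y)$.

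With both identities in hand, the theorem follows by substituting $x_j = x^j$ and $y_j = y^j$ into the right-hand side of Corollary~\ref{cor:spGW}, which turns each factor $x_j + \eta(b_{i,j})y_j$ into $x^j + \eta(b_{i,j})y^j$ and leaves the prefactor $1/|P|$ and the sum over $B \in P$ untouched. There is no serious obstacle here: the MacWilliams identity and Corollary~\ref{cor:spGW} have already done the analytic work (the Poisson summation formula and the evaluation of $\sum_{h\in G^\vee}(h\bullet g)y_j$ via Lemma~\ref{lem:cyclicsum}), and what remains is only to confirm that the homogenizing substitution $x_j = x^j$, $y_j = y^j$ is compatible with the Dick-weight grading. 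The one point I would double-check is the constant $M$, namely that the total available weight $\sum_{i,j} j$ equals $n(n+1)s/2$, which is what guarantees that $W_{P^\perp}$ is genuinely homogeneous of degree $M$ and hence that the exponents of $x$ and $y$ always sum to $M$.
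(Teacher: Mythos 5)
Your proposal is correct and follows essentially the same route as the paper: the paper likewise obtains Theorem~\ref{thm:WEP} by observing that substitution~(\ref{eq:subst}) factors through the specialization $\overline{GW}_{P^\perp}(x^1,\dots,x^n,y^1,\dots,y^n)$ and then invoking Corollary~\ref{cor:spGW}. Your explicit verification that $\mu(A)$ and $M-\mu(A)$ arise as the exponents of $y$ and $x$ is a detail the paper asserts without proof, and your check that $M=\sum_{i,j}j=n(n+1)s/2$ is exactly the right point to confirm.
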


Using Theorem~\ref{thm:WEP}, we can compute $\WAFOM(P)$
and $\delta_{P^\perp}$, the minimum Dick weight of $P^\perp$.
The minimum Dick weight of $P^\perp$ is defined as
$$\delta_{P^\perp} := \min_{B \in P^\perp \backslash \{O\}}{\mu(B)},$$
which is used for bounding WAFOM (see Section~\ref{sec:Bounding WAFOM by the minimum weight}).
First, we introduce how to compute $\WAFOM(P)$. 
The following formula to compute WAFOM is a generalization of 
\cite[Corollary~4.2]{Matsumoto2014acf}
,which treats the case $G=\Ftwo$. 

\begin{corollary}\label{cor:computeWAFOM}
Let $P \subset \bZ_b^{s \times n}$ be a subgroup.
Then we have
$$
\WAFOM(P) = -1 + \frac{1}{|P|}\sum_{B \in P}{\prod_{\substack{1\leq i\leq s \\ 1\leq j\leq n}}
	{ (1 + \eta(b_{i,j})b^{-j})}}.
$$
\end{corollary}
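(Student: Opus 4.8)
The plan is to read off $\WAFOM(P)$ as a single specialization of the weight enumerator polynomial $W_{P^\perp}(x,y)$, for which Theorem~\ref{thm:WEP} already supplies a closed expression as a sum over $P$ rather than over the (generally far larger) dual group $P^\perp$. The whole corollary should then be a two-line bookkeeping exercise built on that theorem.

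First I would observe that the defining sum $W_{P^\perp}(x,y) = \sum_{A \in P^\perp} x^{M-\mu(A)} y^{\mu(A)}$ collapses under the substitution $x \leftarrow 1$, $y \leftarrow b^{-1}$: each monomial becomes $1^{M-\mu(A)} (b^{-1})^{\mu(A)} = b^{-\mu(A)}$, so that
\[
W_{P^\perp}(1, b^{-1}) = \sum_{A \in P^\perp} b^{-\mu(A)}.
\]
Note that the homogenizing exponent $M = n(n+1)s/2$ plays no role in the specialized value, since it sits only on the variable set to $1$.

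Next I would isolate the contribution of $A = O$. Since every entry of $O$ vanishes, the definition of the Dick weight gives $\delta(a_{i,j}) = 0$ for all $i,j$, whence $\mu(O) = 0$ and the corresponding monomial equals $b^{0} = 1$. Subtracting this single term recovers precisely the sum in Definition~\ref{def:WAFOM}, namely $\WAFOM(P) = \sum_{A \in P^\perp \setminus \{O\}} b^{-\mu(A)}$, so that
\[
\WAFOM(P) = W_{P^\perp}(1, b^{-1}) - 1.
\]

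Finally I would apply the same specialization to the right-hand side furnished by Theorem~\ref{thm:WEP}. Under $x \leftarrow 1$, $y \leftarrow b^{-1}$ each factor $x^j + \eta(b_{i,j}) y^j$ becomes $1 + \eta(b_{i,j}) b^{-j}$, and the theorem yields
\[
W_{P^\perp}(1, b^{-1}) = \frac{1}{|P|} \sum_{B \in P} \prod_{\substack{1 \leq i \leq s \\ 1 \leq j \leq n}} \bigl(1 + \eta(b_{i,j}) b^{-j}\bigr),
\]
and combining this with the previous display gives the claimed identity. There is essentially no obstacle here: the only points demanding care are the evaluation $\mu(O) = 0$ (so that the subtracted term is exactly $1$) and the verification that the two substitutions act as stated on the exponents. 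The mathematical content is entirely carried by Theorem~\ref{thm:WEP}; this corollary is merely the numerically useful specialization that replaces a sum over the huge dual group $P^\perp$ by a sum over $P$ itself.
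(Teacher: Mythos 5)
Your argument is exactly the paper's proof: both specialize the weight enumerator polynomial at $x \leftarrow 1$, $y \leftarrow b^{-1}$, note that the $A = O$ term contributes $b^{-\mu(O)} = 1$, and then invoke Theorem~\ref{thm:WEP} for the closed form as a sum over $P$. The proposal is correct and takes essentially the same (indeed the only natural) route.
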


\begin{proof}
\begin{align*}
\WAFOM(P) &= \sum_{A \in P^\perp \backslash \{O\}}{b^{-\mu (A)}}\\
&= -1 + \sum_{A \in P^\perp}{b^{-\mu (A)}}\\
&= -1 + W_{P^\perp}(1, b^{-1})\\
&= -1+ \frac{1}{|P|}\sum_{B \in P}{\prod_{\substack{1\leq i\leq s \\ 1\leq j\leq n}}
	{ (1 + \eta(b_{i,j})b^{-j})}}. \qed
\end{align*}
\end{proof}

The merit of Theorem~\ref{thm:WEP} and Corollary~\ref{cor:computeWAFOM}
is
that the number of summation depends only on $|P|$ linearly,
not $|P^\perp| = b^{sn}/|P|$.
We can calculate weight enumerator polynomials by
$sn$ times multiplication between an integer polynomial with a binomial,
and $|P|$ times addition of such polynomials of degree $n(n+1)/2$.
In the case of computing WAFOM,
we need $sn$ times of multiplication of real numbers and
$|P|$ times of summation of such real numbers,
thus we need $O(sn|P|)$ times of operations of real numbers.
On the other hand,
to calculate weight enumerator polynomials based on the definition,
we need ${|P^\perp|}$ times of summations of monomials,
and
to calculate weight WAFOM based on the definition,
we need ${|P^\perp|}$ times of summations of real numbers.

For QMC, the size $|P|$ cannot exceed a reasonable number of computer operations,
so $|P^\perp| = b^{sn}/|P|$ can be large if $sn$ is sufficiently large. 
This implies that the computational complexity of calculating weight enumerator polynomials or WAFOM
using Theorem~\ref{thm:WEP} or Corollary~\ref{cor:computeWAFOM}
is smaller if $sn$ is large.

Second, we introduce how to compute $\delta_{P^\perp}$.
The minimum Dick weight $\delta_{P^\perp}$
is equal to the degree of leading nonzero term of $-1+W_{P^\perp}(1,y)$,
namely:
\begin{lemma}
Let
$
W_{P^\perp}(1,y) = 1 + \sum_{i = 1}^{\infty}{a_i y^i}.
$
Then we have
$
\delta_{P^\perp} = \min\{i \mid a_i \neq 0\}.
$
\end{lemma}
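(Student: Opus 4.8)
The plan is to observe that the specialization $x=1$ collapses $W_{P^\perp}(x,y)$ into the weight-distribution generating function of $P^\perp$. Setting $x=1$ in the definition
$W_{P^\perp}(x,y)=\sum_{A\in P^\perp} x^{M-\mu(A)}y^{\mu(A)}$
gives
$$
W_{P^\perp}(1,y)=\sum_{A\in P^\perp} y^{\mu(A)}.
$$
Collecting the terms that share a common exponent of $y$, the coefficient $a_i$ of $y^i$ is exactly the number of $A\in P^\perp$ with $\mu(A)=i$. Thus the statement reduces to bookkeeping about which powers of $y$ actually occur.

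The one point that must be checked is that the constant term $1$ accounts for the zero matrix and nothing else, i.e.\ that $\mu(A)=0$ if and only if $A=O$. This follows directly from the definition $\mu(A)=\sum_{i,j} j\,\delta(a_{i,j})$: every summand $j\,\delta(a_{i,j})$ is nonnegative since $j\geq 1$ and $\delta\geq 0$, so the total vanishes precisely when $\delta(a_{i,j})=0$ for all $i,j$, that is, when every entry $a_{i,j}$ is zero. Hence $A=O$ is the unique element of $P^\perp$ of Dick weight $0$, it contributes the term $y^0=1$, and for each $i\geq 1$ one has $a_i=|\{A\in P^\perp\setminus\{O\}\mid \mu(A)=i\}|$.

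With this identification the conclusion is immediate: $a_i\neq 0$ exactly when $P^\perp$ contains a nonzero matrix of Dick weight $i$, so $\min\{i\mid a_i\neq 0\}$ is the least Dick weight attained by a nonzero element of $P^\perp$, which by the definition $\delta_{P^\perp}=\min_{B\in P^\perp\setminus\{O\}}\mu(B)$ is exactly $\delta_{P^\perp}$. I do not expect any genuine obstacle here: once $x=1$ is substituted the assertion is essentially a restatement of the definition of the minimum Dick weight, and the only substantive verification is the nonnegativity of $\mu$ together with the characterization of its zero locus. This care is worth taking since it also confirms that $-1+W_{P^\perp}(1,y)$ has vanishing constant term, so that the phrase \emph{leading nonzero term} used in the statement is well defined.
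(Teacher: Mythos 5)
Your proof is correct and follows the only natural route: the paper itself gives no proof of this lemma, treating it as immediate from the definitions of $W_{P^\perp}$ and $\delta_{P^\perp}$ once $x=1$ is substituted. Your extra check that $\mu(A)=0$ if and only if $A=O$ (so the constant term is exactly $1$) is the one detail worth making explicit, and you handle it correctly.
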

Thus we can obtain the minimum Dick weight of $P^\perp$
by calculating the weight enumerator polynomial of $P^\perp$.
\begin{remark}
Because of Lemma~\ref{lem:upper bound of wt} in Section~\ref{a lower bound of WAFOM},
in order to compute $\delta_{P^\perp}$
it is sufficient to compute $W_{P^\perp}(1,y)$ only up to degree
$\delta_{P^\perp} \leq
d^2/(2s) + 3d/2 + s$.
\end{remark}

\section{Estimation of WAFOM}\label{sec:Estimation of WAFOM}
The following arguments from Section~\ref{geometry of the Dick weight}
to Section~\ref{Existence of low-WAFOM point sets}
are generalizations of \cite{MatsumotoYoshiki}
which deals with the case $G=\Ftwo$,
and arguments in Section~\ref{a lower bound of WAFOM}
are generalizations of \cite{Yoshiki2014lbw},
which deals with the case $G=\Ftwo$.
The methods for proofs are similar to \cite{MatsumotoYoshiki} and \cite{Yoshiki2014lbw}.
In this section,
we suppose that $s$ and $n$ are positive integers
and that $G$ is a finite abelian group.

\subsection{Geometry of the Dick weight}\label{geometry of the Dick weight}
Recall that $G$ is a finite abelian group with $b \geq 2$ elements,
$G^\vee$ its character group.
The Dick weight $\mu \colon (G^\vee)^{s \times n} \to \bN_0$
induces a metric
$$
d(A,B) := \mu(A-B)  \text{ for $A,B \in  (G^\vee)^{s \times n}$}
$$
and thus $(G^\vee)^{s \times n}$ can be regarded as a metric space.

Let $S_{s,n}(m) := | \{ A \in (G^\vee)^{s \times n} \mid \mu(A) = m   \} |$,
namely $S_{s,n}(m)$ is the cardinality of the sphere in $(G^\vee)^{s \times n}$
with center $0$ and radius $m$.
A combinatorial interpretation of $S_{s,n}(m)$ is as follows.
One has $s \times n$ dice. Each die has $b$ faces.
For each value $i = 1, \dots, n$, there exist exactly $s$ dice
with value 0 on one face and $i$ on the other $b-1$ faces.
Then, $S_{s,n}(m)$ is the number of ways that
the summation of the upper surfaces of $s \times n$ dice is $m$.
This combinatorial interpretation implies the following identity:
$$
\prod_{k=1}^n(1+(b-1)x^k)^s = \sum_{m=0}^\infty{S_{s,n}(m) x^m}.
$$
You can also see this identity from Theorem~\ref{thm:WEP}
for $P= \{O\}$, $x \gets 1$, and $y \gets x$.
Note that the right hand side is a finite sum.
It is easy to see that $S_{s,n}(m)$ is monotonically increasing with respect to
$s$ and $n$, and $S_{s,m}(m) = S_{s,m+1}(m) = S_{s,m+2}(m) = \cdots $ holds.

\begin{definition}
$S_s(m) := S_{s,m}(m).$
\end{definition}

We have the following identity between formal power series:
\begin{equation}\label{eq:S_s(m)}
\prod_{k=1}^\infty(1+(b-1)x^k)^s = \sum_{m=0}^\infty{S_s(m) x^m}.
\end{equation}

For any positive integer $M$, we define
$$
\mathscr{B}_{s,n}(M) :=  \{ A \in  (G^\vee)^{s \times n} \mid \mu(A) \leq M  \}, \quad 
\mathrm{vol}_{s,n}(M) := |\mathscr{B}_{s,n}(M)|,
$$
namely $\mathscr{B}_{s,n}(M)$ is the ball in $G^{s \times n}$
with center 0 and radius $M$,
and $\mathrm{vol}_{s,n}(M)$ is its cardinality.
We have $\mathrm{vol}_{s,n}(M) = \sum_{m=0}^{M}{S_{s,n}(m)}$,
and thus
$\mathrm{vol}_{s,n}(M)$ inherits properties of $S_{s,n}(m)$, namely,
$\mathrm{vol}_{s,n}(M)$ is also monotonically increasing with respect to $s$ and $n$,
and $\mathrm{vol}_{s,M}(M) = \mathrm{vol}_{s,M+1}(M) = \mathrm{vol}_{s,M+2}(M) = \dots$ holds.

\begin{definition}
$\mathrm{vol}_{s}(M) :=\mathrm{vol}_{s,M}(M)$.
\end{definition}

\subsection{Combinatorial inequalities}

\begin{lemma}\label{lem:vol}
$$
\mathrm{vol}_{s,n}(M) \leq \mathrm{vol}_s(M) \leq \exp(2\sqrt{(b-1)sM}).
$$
\end{lemma}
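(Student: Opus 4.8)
The plan is to establish the two inequalities separately. The first inequality $\mathrm{vol}_{s,n}(M) \leq \mathrm{vol}_s(M)$ is immediate from the monotonicity properties recorded just before the lemma: since $\mathrm{vol}_{s,n}(M)$ is monotonically increasing in $n$ and stabilizes at $n = M$ (with $\mathrm{vol}_{s,M}(M) = \mathrm{vol}_{s,M+1}(M) = \cdots$), we have $\mathrm{vol}_{s,n}(M) \leq \mathrm{vol}_{s,M}(M) = \mathrm{vol}_s(M)$ for every $n$. So the real content lies entirely in the bound $\mathrm{vol}_s(M) \leq \exp(2\sqrt{(b-1)sM})$.

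For the second inequality, the natural tool is the generating function identity \eqref{eq:S_s(m)}, namely $\prod_{k=1}^\infty (1 + (b-1)x^k)^s = \sum_{m=0}^\infty S_s(m) x^m$. The standard trick is a Chernoff/saddle-point style estimate: for any $0 < x < 1$, since all coefficients $S_s(m)$ are nonnegative, we have
$$
\mathrm{vol}_s(M) = \sum_{m=0}^M S_s(m) \leq \sum_{m=0}^M S_s(m) x^{m-M} \leq x^{-M} \sum_{m=0}^\infty S_s(m) x^m = x^{-M} \prod_{k=1}^\infty (1 + (b-1)x^k)^s.
$$
First I would take logarithms to get $\log \mathrm{vol}_s(M) \leq -M \log x + s \sum_{k=1}^\infty \log(1 + (b-1)x^k)$, then bound the sum using $\log(1+t) \leq t$, giving $\sum_{k=1}^\infty \log(1+(b-1)x^k) \leq (b-1)\sum_{k=1}^\infty x^k = (b-1)\tfrac{x}{1-x}$.

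Next I would optimize the resulting bound $-M\log x + s(b-1)\tfrac{x}{1-x}$ over $x \in (0,1)$, or more simply substitute a well-chosen value of $x$. Writing $x = e^{-t}$ for $t > 0$ makes $-M\log x = Mt$, and using the cruder-but-cleaner estimate $\tfrac{x}{1-x} \leq \tfrac{1}{t}$ (which follows from $1 - e^{-t} \geq t e^{-t}$, i.e. $e^t - 1 \geq t$) would yield a bound of the form $Mt + s(b-1)/t$. Minimizing $Mt + s(b-1)/t$ over $t$ by setting $t = \sqrt{s(b-1)/M}$ gives exactly $2\sqrt{(b-1)sM}$, which is the target. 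I would need to check that the chosen $t$ corresponds to a valid $x = e^{-t} \in (0,1)$, which holds for any $t > 0$, so there is no boundary issue.

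The step I expect to require the most care is the bounding of the infinite product. The telescoping estimate $\tfrac{x}{1-x} \leq \tfrac{1}{t}$ must be verified cleanly so that the geometric-series tail is controlled and the optimization lands precisely on the constant $2\sqrt{(b-1)sM}$ rather than something slightly larger; a looser intermediate bound would spoil the clean exponent. An alternative, if the elementary inequality proves awkward, is to keep $\tfrac{x}{1-x}$ and optimize $-M\log x + s(b-1)\tfrac{x}{1-x}$ directly by differentiation, but this leads to a messier critical-point equation, so I would prefer the $e^{-t}$ substitution with the simplifying inequality. Everything else is routine: the monotonicity facts are already established in the text, and the Chernoff bound only uses nonnegativity of the $S_s(m)$.
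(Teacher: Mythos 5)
Your proof is correct and follows essentially the same route as the paper: a Chernoff-type bound on the generating function $\prod_k(1+(b-1)x^k)^s$, the estimate $\log(1+t)\le t$, and an AM--GM optimization landing on $2\sqrt{(b-1)sM}$. The only cosmetic differences are that the paper truncates the product at $k=M$ and bounds $\log(1/x)\le(1-x)/x$ before applying AM--GM, whereas you keep the infinite product and substitute $x=e^{-t}$ with $1/(e^t-1)\le 1/t$; both yield the identical constant.
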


\begin{proof}
We have already seen the first inequality.
We prove the next inequality along \cite[Exercise~3(b), p.332]{Matouvsek1998itd},
which treats only $S=1$ and $b=2$ case.
If $M=0$ it is trivial, and so we assume that $M>0$. 
Define a polynomial with non-negative integer coefficients by
$$
f_{s,M}(x) := \prod_{k=1}^M{(1+(b-1)x^k)^s}.
$$
Since $f_{s,M}(x)$ has only non-negative coefficients, from Identity (\ref{eq:S_s(m)}) we have
$\sum_{m=0}^M S_{s}(m)x^m \leq f_{s,M}(x) \quad (x \in (0,1))$.
Hence we have
$$
\mathrm{vol}_{s}(M) = \sum_{m=0}^M S_{s}(m) \leq \sum_{m=0}^M S_{s}(M)x^{m-M} \leq f_{s,M}(x)/x^M \quad (x \in (0,1)).
$$
By taking the logarithm of the both sides and using the well-known inequality $\log(1+X) \leq X$,
for all $x \in (0,1)$ we have
\begin{align*}
\mathrm{vol}_{s,n}(M) &\leq s\sum_{k=1}^M{\log(1+ (b-1)x^k)} + M \log(1/x) \\
& < s(b-1)\sum_{k=1}^M{x^k} + M \log \left(1+ \frac{1-x}{x}\right)\\
& < s(b-1)\frac{x}{1-x} + M \frac{1-x}{x}.\\
\end{align*}
By comparison of the arithmetic mean and the geometric mean,
the last expression attains the minimum value $2\sqrt{(b-1)sM}$
when $ s(b-1)x/(1-x) = M(1-x)/x$ holds,
namely $x = (1+\sqrt{(b-1)s/M})^{-1} \in (0,1)$.\qed
\end{proof}

\begin{lemma}\label{lem:S_{s,n}}
$$
S_{s,n}(M) \leq S_s(M) \leq \exp(2\sqrt{(b-1)sM}).
$$
\end{lemma}

\begin{proof}
It follows from Lemma~\ref{lem:vol} and the inequality $S_s(M) \leq \mathrm{vol}_s(M)$.\qed
\end{proof}

\subsection{Bounding WAFOM by the minimum weight}\label{sec:Bounding WAFOM by the minimum weight}

\begin{definition}\label{def:min Dick wt}
Let $P \subset G^{s \times n}$ be a subgroup.
The minimum Dick weight of $P^\perp$ is defined by
$$
\delta_{P^\perp} := \min_{B \in P^\perp \backslash \{O\}}{\mu(B)}
$$
\end{definition}

The next lemma bounds $\WAFOM(P)$ by the minimum weight of $P^\perp$.
\begin{lemma}\label{lem:WAFOM and min Dick wt}
For a positive integer $M$, define
$$
C_{s,n}(M) := \sum_{A \in (G^\vee)^{s \times n} \backslash \mathscr{B}_{s,n}(M-1)}
{b^{-\mu(A)}}
= \sum_{m=M}^{\infty}{S_{s,n}(m)b^{-m}}
$$
and
$$
C_{s}(M) := \sum_{m=M}^{\infty}{S_{s}(m)b^{-m}}.
$$
Then we have
$$
\WAFOM^n(P) = \sum_{A \in P^\perp \backslash \{ O\}}{b^{-\mu(A)}}
\leq C_{s,n}(\delta_{P^\perp})
\leq C_{s}(\delta_{P^\perp}).
$$
\end{lemma}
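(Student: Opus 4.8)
The plan is to prove the two inequalities in
$$
\WAFOM^n(P) = \sum_{A \in P^\perp \backslash \{ O\}}{b^{-\mu(A)}}
\leq C_{s,n}(\delta_{P^\perp})
\leq C_{s}(\delta_{P^\perp})
$$
separately, since each is of a different nature. The first is a direct consequence of the definition of $\delta_{P^\perp}$ as the minimum Dick weight, and the second follows purely from the monotonicity of $S_{s,n}(m)$ established in Section~\ref{geometry of the Dick weight}.

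For the first inequality, I would observe that every $A \in P^\perp \backslash \{O\}$ satisfies $\mu(A) \geq \delta_{P^\perp}$ by Definition~\ref{def:min Dick wt}; equivalently, $A \notin \mathscr{B}_{s,n}(\delta_{P^\perp}-1)$, since $\mathscr{B}_{s,n}(M-1)$ consists exactly of those elements with Dick weight at most $M-1$. Hence $P^\perp \backslash \{O\}$ is a subset of $(G^\vee)^{s \times n} \backslash \mathscr{B}_{s,n}(\delta_{P^\perp}-1)$. Since all summands $b^{-\mu(A)}$ are nonnegative, summing over the larger set only increases the total, which gives
$$
\sum_{A \in P^\perp \backslash \{ O\}}{b^{-\mu(A)}}
\leq \sum_{A \in (G^\vee)^{s \times n} \backslash \mathscr{B}_{s,n}(\delta_{P^\perp}-1)}{b^{-\mu(A)}}
= C_{s,n}(\delta_{P^\perp}).
$$
The final equality is just the regrouping of the sum by level sets of $\mu$, collecting the $S_{s,n}(m)$ elements of weight exactly $m$ for each $m \geq \delta_{P^\perp}$.

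For the second inequality, I would use the term-by-term comparison $C_{s,n}(M) \leq C_s(M)$. Both are expressed as $\sum_{m=M}^{\infty} S_{s,n}(m) b^{-m}$ and $\sum_{m=M}^{\infty} S_s(m) b^{-m}$ respectively, and since $S_s(m) = S_{s,m}(m)$ with $S_{s,n}(m)$ monotonically increasing in $n$ (as noted in Section~\ref{geometry of the Dick weight}), one has $S_{s,n}(m) \leq S_s(m)$ for every $m$. As $b^{-m} > 0$ and both series have nonnegative terms, the comparison is immediate.

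I do not expect a genuine obstacle here: the result is essentially bookkeeping, relying on nonnegativity of the summands, the defining property of $\delta_{P^\perp}$, and the already-proven monotonicity of the sphere-counting function. The only point requiring minor care is the index bookkeeping in the first inequality, namely matching $\mu(A) \geq \delta_{P^\perp}$ with the complement of the ball of radius $\delta_{P^\perp}-1$ rather than radius $\delta_{P^\perp}$; one must verify that the set $P^\perp \backslash \{O\}$ lands in the \emph{open} exterior so that the summation bound is correct.
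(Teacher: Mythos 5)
Your proof is correct and follows essentially the same route as the paper: the first inequality via the set inclusion $P^\perp \backslash \{O\} \subset (G^\vee)^{s \times n} \backslash \mathscr{B}_{s,n}(\delta_{P^\perp}-1)$ together with nonnegativity of the summands, and the second via $S_{s,n}(m) \leq S_s(m)$ (which the paper simply declares ``trivial''). Your version just spells out the monotonicity argument for the second inequality a bit more explicitly than the paper does.
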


\begin{proof}
The last inequality is trivial, so it suffices to prove the first inequality.
Since
$
P^\perp \backslash \{ O\}  \subset
 (G^\vee)^{s \times n} \backslash \mathscr{B}_{s,n}(\delta_{P^\perp}-1)
$
holds, we have
\begin{align*}
\WAFOM^n(P) = \sum_{A \in P^\perp \backslash \{ O\}}{b^{-\mu(A)}}
&\leq \sum_{A \in  (G^\vee)^{s \times n} \backslash \mathscr{B}_{s,n}(\delta_{P^\perp}-1)}{b^{-\mu(A)}}\\
&= C_{s,n}(\delta_{P^\perp}). \qed
\end{align*}
\end{proof}

We shall estimate $C_{s}(\lceil M' \rceil)$ ($C$ for the Complement of the ball)
for rather general real number $M'$:
from Lemma~\ref{lem:S_{s,n}} it follows that
\begin{align}\label{eq:upper bound of C}
C_{s}(\lceil M' \rceil) &= \sum_{m=\lceil M' \rceil}^{\infty}{S_{s}(m)b^{-m}}\notag\\
&\leq \sum_{m=\lceil M' \rceil}^{\infty}{b^{-m}e^{2\sqrt{(b-1)sm}}}\notag\\
&=b^{-\lceil M' \rceil} e^{2\sqrt{(b-1)s\lceil M' \rceil}}
+ \sum_{m=\lceil M' \rceil + 1}^{\infty}{b^{-m}e^{2\sqrt{(b-1)sm}}}.
\end{align}

First, we estimate the second term of the above.
The function
$$\exp(2\sqrt{(b-1)sm}) b^{-m} = \exp(2\sqrt{(b-1)sm} - m\log{b})$$
is monotonically decreasing with respect to $m$ if
\begin{align*}
\frac{d}{dm} \left(2\sqrt{(b-1)sm} - m\log{b} \right) \leq 0
&\iff \frac{2(b-1)s}{2\sqrt{(b-1)sm}} - \log{b}  \leq 0\\
&\iff \sqrt{\frac{(b-1)s}{m}} \leq \log{b}\\
&\iff m \geq (\log{b})^{-2}(b-1)s,
\end{align*}
hence we assume that $M' \geq (\log{b})^{-2}(b-1)s$.
Then, we have
\begin{align*}
&\sum_{m=\lceil M' \rceil + 1}^{\infty}{b^{-m}e^{2\sqrt{(b-1)sm}}}\\
&\leq \int_{m= \lceil M' \rceil}^\infty{e^{-m\log{b}} e^{2\sqrt{(b-1)sm}}  \, dm}\\
&= \int_{m= \lceil M' \rceil}^\infty
{\exp\left(-(\log{b})\left(\sqrt{m}-{\frac{\sqrt{(b-1)s}}{\log{b}}}\right)^2
+ \frac{(b-1)s}{\log{b}}\right)  \, dm}\\
&\leq \int_{m=M'}^\infty
{\exp\left(-(\log{b})\left(\sqrt{m}-{\frac{\sqrt{(b-1)s}}{\log{b}}}\right)^2
+ \frac{(b-1)s}{\log{b}}\right)  \, dm}\\
&= \int_{x=\sqrt{M'}}^\infty
{\exp\left(-(\log{b})\left(x-{\frac{\sqrt{(b-1)s}}{\log{b}}}\right)^2
+ \frac{(b-1)s}{\log{b}}\right)2x  \, dx}.
\end{align*}
In order to bound the last integral from above,
for a positive number $c$ we assume that
$\sqrt{M'} \geq (1+c)\sqrt{(b-1)s}/\log{b}$ or equivalently
$M' \geq (1+c)^2(\log{b})^{-2}(b-1)s$.
This assumption is stronger than the previous assumption $M' \geq (\log{b})^{-2}(b-1)s$.
Then, on the domain of integration
$x \geq \sqrt{M'} \geq (1+c)\sqrt{(b-1)s}/\log{b}$,
we have $cx \leq (1+c)(x - \sqrt{(b-1)s}/\log{b})$.
Hence the estimation continues:
\begin{align*}
& \sum_{m=\lceil M' \rceil + 1}^{\infty}{b^{-m}e^{2\sqrt{(b-1)sm}}} \\
&\leq \int_{x=\sqrt{M'}}^\infty
\exp\left(-(\log{b})\left(x-{\frac{\sqrt{(b-1)s}}{\log{b}}}\right)^2+ \frac{(b-1)s}{\log{b}}\right) \\
&\qquad\qquad\qquad\qquad \times 2\frac{1+c}{c}\left(x-\frac{\sqrt{(b-1)s}}{\log{b}}\right)  \, dx \\
&=\frac{1+c}{c}\frac{1}{\log{b}}\left[
-\exp\left(-(\log{b})\left(x-{\frac{\sqrt{(b-1)s}}{\log{b}}}\right)^2+ \frac{(b-1)s}{\log{b}}\right)
\right]_{x=\sqrt{M'}}^{\infty}\\
&= \frac{1+c}{c}\frac{1}{\log{b}}
\exp\left(
-(\log{b})\left(\sqrt{M'}-\frac{\sqrt{(b-1)s}}{\log{b}}\right)^2 + \frac{(b-1)s}{\log{b}}
\right)\\
&= \frac{1+c}{c}\frac{1}{\log{b}}
\exp(-(\log{b})M' + 2\sqrt{(b-1)sM'})\\
&=  \frac{1+c}{c}\frac{1}{\log{b}}
b^{-M'} e^{2\sqrt{(b-1)sM'}}.
\end{align*}

Second, we consider the first term of (\ref{eq:upper bound of C}).
We have already proved that
$\exp(2\sqrt{(b-1)sm}) b^{-m}$ is monotonically decreasing
if $m \geq (\log{b})^{-2}(b-1)s$,
and thus the assumption $M' \geq (\log{b})^{-2}(b-1)s$
implies
\begin{align*}
b^{-\lceil M' \rceil} e^{2\sqrt{(b-1)s\lceil M' \rceil}}
\leq b^{-M'} e^{2\sqrt{(b-1)sM'}}.
\end{align*}

Therefore we have
\begin{align*}
C_{s}(\lceil M' \rceil)
&\leq b^{-\lceil M' \rceil} e^{2\sqrt{(b-1)s\lceil M' \rceil}}
+ \sum_{m=\lceil M' \rceil + 1}^{\infty}b^{-m} {e^{2\sqrt{(b-1)sm}}}\\
&\leq b^{-M'} e^{2\sqrt{(b-1)sM'}} +
\frac{1+c}{c}\frac{1}{\log{b}} b^{-M'} e^{2\sqrt{(b-1)sM'}}\\
&= \left(1 + \frac{1+c}{c}\frac{1}{\log{b}} \right) b^{-M'} e^{2\sqrt{(b-1)sM'}}.
\end{align*}
Now we proved:

\begin{proposition}\label{prop:bound of C_{s,n}}
Let $c$ be a positive real number.
Let $M'$ be a real number with $M' \geq (1+c)^2(\log{b})^{-2}(b-1)s$.
Then we have the following bound
$$
C_{s,n}(\lceil M' \rceil)
\leq C_{s}(\lceil M' \rceil)
\leq \left(1 + \frac{1+c}{c}\frac{1}{\log{b}} \right)
b^{-M'} e^{2\sqrt{(b-1)sM'}}.
$$
\end{proposition}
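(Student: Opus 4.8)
The plan is to first discard $S_{s,n}$ in favor of $S_s$: the inequality $C_{s,n}(\lceil M'\rceil) \leq C_{s}(\lceil M'\rceil)$ is immediate from the termwise bound $S_{s,n}(m) \leq S_s(m)$ supplied by Lemma~\ref{lem:S_{s,n}}, so the content lies entirely in the second inequality. To bound $C_s(\lceil M'\rceil) = \sum_{m \geq \lceil M'\rceil} S_s(m) b^{-m}$, I would replace each sphere count $S_s(m)$ by the exponential upper bound $\exp(2\sqrt{(b-1)sm})$ from the same lemma. This reduces the whole problem to controlling the single series $\sum_{m \geq \lceil M'\rceil} b^{-m} e^{2\sqrt{(b-1)sm}}$, which involves neither the group $G$ nor the precision $n$ any longer.

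My approach to that series is to peel off the leading term $m = \lceil M'\rceil$ and estimate the remaining tail $\sum_{m \geq \lceil M'\rceil + 1}$ by an integral. The summand $g(m) := \exp(2\sqrt{(b-1)sm} - m\log b)$ is eventually decreasing: differentiating the exponent shows that $g$ is monotonically decreasing precisely when $m \geq (\log b)^{-2}(b-1)s$, a condition implied by the hypothesis $M' \geq (1+c)^2(\log b)^{-2}(b-1)s$. Hence the tail is dominated by $\int_{\lceil M'\rceil}^\infty g(m)\,dm \leq \int_{M'}^\infty g(m)\,dm$, while the leading term obeys $g(\lceil M'\rceil) \leq g(M') = b^{-M'} e^{2\sqrt{(b-1)sM'}}$ by the same monotonicity.

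The crux is bounding the integral. Completing the square in $\sqrt{m}$ rewrites the exponent as $-(\log b)\bigl(\sqrt{m} - \sqrt{(b-1)s}/\log b\bigr)^2 + (b-1)s/\log b$; substituting $x = \sqrt{m}$ then turns the integral into a Gaussian-type integral carrying an extra factor $2x$. This factor is the sole obstruction, since $\int 2x\, e^{-(\log b)(x-a)^2}\,dx$ (with $a := \sqrt{(b-1)s}/\log b$) has no elementary antiderivative. The trick, which I expect to be the main difficulty, is to replace $2x$ by the larger quantity $\tfrac{2(1+c)}{c}(x-a)$: this is valid exactly on the domain $x \geq \sqrt{M'} \geq (1+c)a$ guaranteed by the hypothesis, because there one has $cx \leq (1+c)(x-a)$. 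Once the factor is proportional to $(x-a)$, the integrand is an exact derivative of $-\tfrac{1}{2\log b}\exp(-(\log b)(x-a)^2 + (b-1)s/\log b)$, so the integral evaluates in closed form, and after undoing the completion of the square it equals $\tfrac{1+c}{c}\tfrac{1}{\log b}\, b^{-M'} e^{2\sqrt{(b-1)sM'}}$.

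Finally I would add the leading-term bound to the tail bound, obtaining $C_s(\lceil M'\rceil) \leq \bigl(1 + \tfrac{1+c}{c}\tfrac{1}{\log b}\bigr)\, b^{-M'} e^{2\sqrt{(b-1)sM'}}$, which is the asserted estimate. It is worth noting that the hypothesis on $M'$ is invoked twice for two distinct reasons—once to secure monotonicity of $g$ (underpinning both the integral comparison and the leading-term bound) and once to license the linear-factor replacement inside the integral—so the precise constant $(1+c)^2$ in the lower bound for $M'$ is exactly what the argument demands, with the free parameter $c$ surviving in the final constant factor.
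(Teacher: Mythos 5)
Your proposal is correct and follows essentially the same route as the paper: reduce to $S_s$ via Lemma~\ref{lem:S_{s,n}}, split off the leading term, bound the tail by an integral using monotonicity of $\exp(2\sqrt{(b-1)sm})b^{-m}$ for $m \geq (\log b)^{-2}(b-1)s$, complete the square in $\sqrt{m}$, and replace the factor $2x$ by $\tfrac{2(1+c)}{c}(x - \sqrt{(b-1)s}/\log b)$ to obtain an exact antiderivative. The paper's argument is identical in every step, including the two distinct uses of the hypothesis on $M'$ that you identify.
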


\subsection{Existence of low-WAFOM point sets}\label{Existence of low-WAFOM point sets}
We denote the probability of the event $A$ by $\prob[A]$.
Let $p_b$ be the smallest prime factor of $b$.
Let $d$ be a positive integer.
Choose $d$ matrices $B_1, \dots, B_d \in G^{s \times n}$ independently and 
uniformly at random.
Let $P = \langle B_1, \dots, B_d \rangle \subset G^{s \times n}$
be the $G$-linear span
of $B_1, \dots , B_d$,
namely
$P = \{g_1B_1 + \cdots + g_dB_d \mid g_1, \dots, g_d \in G \}$
where $g \in G$ acts on $B = (b_{ij})$ by $gB = (gb_{ij})$.
Note that $|P| \leq b^d$.

\begin{remark}
If $G = \bZ_b$, by the theory of invariant factor decomposition,
we can say that there exist matrices $B'_1, \dots, B'_d$ such that
$P' := \langle B'_1, \dots, B'_d \rangle$ includes $P$ and
becomes a free $\bZ_b$-module of rank $d$.
Thus if  $G = \bZ_b$,
we can replace ``subgroup $P$'' in this subsection with a ``digital net $P$,''
since in this subsection
we consider only the existence of a subgroup which has large minimum Dick weight,
and $P \subset P'$ implies that $\delta_{P^\perp} \leq \delta_{{P'}^\perp}$.
\end{remark}

First, we evaluate $\prob[\mathrm{perp}_L]$,
where we define $\mathrm{perp}_L$ as the event that $B_1, \dots, B_d$ are all perpendicular to $L \in  (G^\vee)^{s \times n}$.

\begin{lemma}
Let $L \in (G^\vee)^{s \times n}$ be a nonzero matrix.
Then we have $\prob[L \perp B] \leq 1/p_b$.
Especially we have $\prob[\mathrm{perp}_L] \leq p_b ^{-d}$.
\end{lemma}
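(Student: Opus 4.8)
The plan is to compute $\prob[L \perp B]$ directly for a single uniformly random matrix $B \in G^{s \times n}$ and a fixed nonzero $L \in (G^\vee)^{s \times n}$, then use independence of $B_1, \dots, B_d$ to conclude. The event $L \perp B$ means $L \bullet B = 1$, where the pairing on $(G^\vee)^{s \times n} \times G^{s \times n}$ is the product $\prod_{i,j} (l_{i,j} \bullet b_{i,j})$ over all entries. First I would fix some entry $(i_0, j_0)$ with $l_{i_0, j_0} \neq 0$, which exists since $L$ is nonzero. The key observation is that once all the other entries of $B$ are fixed arbitrarily, the condition $L \bullet B = 1$ becomes a single equation $l_{i_0,j_0} \bullet b_{i_0,j_0} = \zeta$ for a determined constant $\zeta \in T$, involving only the remaining free entry $b_{i_0,j_0}$.

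\textbf{Bounding the single-entry probability.}
So the core step reduces to the one-dimensional claim: for a fixed nonzero character $h := l_{i_0,j_0} \in G^\vee$ and a target $\zeta \in T$, the number of $g \in G$ with $h \bullet g = \zeta$ is at most $|G|/p_b$. To see this, consider the homomorphism $h \colon G \to T$; its image is a nontrivial finite cyclic subgroup of $T$, so it has some order $m > 1$ dividing $|G| = b$. The fibers of $h$ all have the same cardinality $|G|/m = |\ker h|$, so the number of solutions is either $0$ or exactly $|G|/m$. Since $m > 1$ and $m \mid b$, we have $m \geq p_b$, whence $|G|/m \leq b/p_b$. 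Averaging over the free entry $b_{i_0,j_0}$, the conditional probability of $L \bullet B = 1$ given the other entries is at most $(b/p_b)/b = 1/p_b$; since this bound is uniform over all choices of the other entries, $\prob[L \perp B] \leq 1/p_b$ follows.

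\textbf{From one matrix to $d$ matrices.}
For the second assertion, the event $\mathrm{perp}_L$ is the intersection over $k = 1, \dots, d$ of the independent events $L \perp B_k$, each of probability at most $1/p_b$ by the first part. Hence $\prob[\mathrm{perp}_L] = \prod_{k=1}^d \prob[L \perp B_k] \leq p_b^{-d}$.

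\textbf{Anticipated obstacle.}
The main subtlety is the single-entry bound: one must use that the \emph{smallest prime factor} $p_b$ of $b$ controls the fiber size, rather than naively claiming $1/b$. The point is that $h \bullet g$ need not take all $b$ values as $g$ ranges over $G$ when $G$ is not cyclic — the image of $h$ can be a proper subgroup — so the worst case is when $h$ has image of order exactly $p_b$, giving fibers of size $b/p_b$. I would make sure the orthogonality/counting argument is phrased via the order of $\mathrm{im}(h)$ dividing $|G|$, which is exactly where $p_b$ enters. Everything else is a routine application of independence.
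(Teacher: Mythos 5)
Your proof is correct and rests on the same key idea as the paper's: the image of a nontrivial homomorphism into $T$ has order dividing a power of $b$ and hence at least $p_b$, so the fiber over $1$ has density at most $1/p_b$; independence then gives $\prob[\mathrm{perp}_L] \leq p_b^{-d}$. The only cosmetic difference is that the paper applies this image-order argument once to the full pairing map $B \mapsto L \bullet B$ on $G^{s \times n}$, whereas you condition on all but one entry and run the same argument on a single coordinate.
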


\begin{proof}
We consider the map
$(L\bullet) \colon G^{s \times n} \to \bC, B \mapsto L \bullet B$.
Then we have the surjective group homomorphism
$G^{s \times n} \to \mathrm{Im}(L\bullet)$,
and thus $|\mathrm{Im}(L\bullet)|$ divides $G^{s \times n}$.
Moreover, since $L$ is nonzero, $|\mathrm{Im}(L\bullet)|$ is larger than 1.
Hence we have $|\mathrm{Im}(L\bullet)| \geq p_b$.
Therefore we have
$\prob[L \perp B] = |\mathrm{Im}(L\bullet)|^{-1} \leq 1/p_b$,
and especially we have
$\prob[\mathrm{perp}_L] = \prob[L \perp B] ^d \leq p_b ^{-d}$.\qed
\end{proof}

Let $M$ be a positive integer.
We evaluate the probability of the event that $\delta_{P^\perp} \geq M$.
We have
\begin{align*}
\prob[\delta_{P^\perp} \geq M] &= 1 - \prob[\delta_{P^\perp} \leq M-1]\\
&=1 - \prob[\exists L \in \mathscr{B}_{s,n}(M-1)\backslash\{O\}
\text{ s.t. }  L \in P^\perp]\\
&=1 - \prob[\exists L \in \mathscr{B}_{s,n}(M-1)\backslash\{O\}
\text{ s.t. }  L \perp B_1, \dots, L \perp B_d]\\
&=1 - \prob[\cup_{ L \in \mathscr{B}_{s,n}(M-1)\backslash\{O\}}{\mathrm{perp}_L}]\\
&\geq 1 - \sum_{L \in \mathscr{B}_{s,n}(M-1)\backslash\{O\}}{\prob[\mathrm{perp}_L]}\\
&\geq 1 - (\mathrm{vol}_{s,n}(M-1) -1) \cdot {p_b}^{-d}\\
&> 1 - \mathrm{vol}_{s,n}(M-1) \cdot {p_b}^{-d}.
\end{align*}
This shows:

\begin{proposition}
If $\mathrm{vol}_{s,n}(M-1) \leq {p_b}^d$ holds,
then there exists a subgroup $P \subset G^{s \times n}$ with $|P| \leq b^d$
satisfying $\delta_{P^\perp} \geq M$.
\end{proposition}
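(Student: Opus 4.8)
The plan is to apply the probabilistic method to the estimate derived immediately above the statement, so that essentially all of the analytic work is already in hand and only the passage from positive probability to existence remains. The computation preceding the proposition shows that, when $B_1, \dots, B_d \in G^{s \times n}$ are chosen independently and uniformly at random and $P := \langle B_1, \dots, B_d \rangle$ is their $G$-linear span, one has
$$
\prob[\delta_{P^\perp} \geq M] > 1 - \mathrm{vol}_{s,n}(M-1) \cdot p_b^{-d}.
$$

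First I would feed in the hypothesis $\mathrm{vol}_{s,n}(M-1) \leq p_b^d$. This gives $\mathrm{vol}_{s,n}(M-1) \cdot p_b^{-d} \leq 1$, and hence
$$
\prob[\delta_{P^\perp} \geq M] > 1 - 1 = 0.
$$
The strictness of the inequality in the displayed bound is the crucial feature: it arises precisely from replacing $(\mathrm{vol}_{s,n}(M-1)-1)\,p_b^{-d}$ by $\mathrm{vol}_{s,n}(M-1)\,p_b^{-d}$, which is a genuine increase since $p_b^{-d} > 0$. Consequently the probability is strictly positive, not merely nonnegative.

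Finally, a positive probability over the random choice of $B_1, \dots, B_d$ forces the existence of at least one concrete tuple for which the event $\{\delta_{P^\perp} \geq M\}$ occurs. For this tuple, the subgroup $P = \langle B_1, \dots, B_d \rangle$ satisfies $|P| \leq b^d$ (as already recorded in the setup) and $\delta_{P^\perp} \geq M$, which is exactly the assertion.

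I do not anticipate a substantive obstacle here: the union bound over $\mathscr{B}_{s,n}(M-1) \setminus \{O\}$ together with the per-matrix estimate $\prob[\mathrm{perp}_L] \leq p_b^{-d}$ has already been established, so no further combinatorial input is required. The one point deserving care is the strict inequality, without which the boundary case $\mathrm{vol}_{s,n}(M-1) = p_b^d$ would yield only $\prob[\delta_{P^\perp} \geq M] \geq 0$ and fail to guarantee a good instance; the argument above shows that this boundary case is in fact covered.
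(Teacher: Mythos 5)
Your proof is correct and is essentially identical to the paper's: the displayed probabilistic estimate $\prob[\delta_{P^\perp} \geq M] > 1 - \mathrm{vol}_{s,n}(M-1)\cdot p_b^{-d}$ combined with the hypothesis gives strictly positive probability, hence existence. Your observation about the strict inequality (coming from dropping the $-1$ in $(\mathrm{vol}_{s,n}(M-1)-1)p_b^{-d}$) is exactly what makes the boundary case $\mathrm{vol}_{s,n}(M-1)=p_b^d$ work, and matches the paper's reasoning.
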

By Lemma~\ref{lem:vol}, the condition of this proposition is satisfied if it holds that
\begin{equation}\label{eq:condition of M}
e^{2\sqrt{(b-1)s(M-1)}} \leq {p_b}^d \iff M \leq \frac{(\log{p_b})^2 d^2}{4(b-1)s} + 1.
\end{equation}
Therefore we have the following sufficient condition on the existence of $M$.
\begin{proposition}\label{prop:M exists?}
If $M \leq (\log{p_b})^2 d^2/(4(b-1)s) + 1$ holds,
then Inequality (\ref{eq:condition of M}) is satisfied, 
and hence there exists a subgroup $P \subset G^{s \times n}$
with $|P| \leq b^d$ satisfying $\delta_{P^\perp} \geq M$.
\end{proposition}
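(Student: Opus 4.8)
The plan is to chain together the two results immediately preceding the statement, together with Lemma~\ref{lem:vol}, so that no new idea is required beyond verifying one elementary algebraic equivalence and checking that all inequalities point in the right direction.

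First I would establish the equivalence already recorded in Inequality (\ref{eq:condition of M}), namely that $e^{2\sqrt{(b-1)s(M-1)}} \leq {p_b}^d$ is equivalent to $M \leq (\log{p_b})^2 d^2/(4(b-1)s) + 1$. This is a direct manipulation: take logarithms of both sides (the logarithm is monotone, so the inequality is preserved) to obtain $2\sqrt{(b-1)s(M-1)} \leq d\log{p_b}$, then square (legitimate since both sides are non-negative) to get $4(b-1)s(M-1) \leq d^2(\log{p_b})^2$, and finally rearrange. The only point requiring a moment's care is the direction of the inequality under squaring, which is safe precisely because both sides are non-negative.

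Next I would invoke Lemma~\ref{lem:vol}, which gives $\mathrm{vol}_{s,n}(M-1) \leq \exp(2\sqrt{(b-1)s(M-1)})$. Combining this with the hypothesis $M \leq (\log{p_b})^2 d^2/(4(b-1)s) + 1$ (equivalently $e^{2\sqrt{(b-1)s(M-1)}} \leq {p_b}^d$, by the previous step) yields $\mathrm{vol}_{s,n}(M-1) \leq {p_b}^d$, which is exactly the hypothesis of the preceding Proposition. Applying that Proposition then produces a subgroup $P \subset G^{s \times n}$ with $|P| \leq b^d$ and $\delta_{P^\perp} \geq M$, completing the argument.

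Since all the substantive work---the union-bound estimate of $\prob[\delta_{P^\perp} \geq M]$ and the volume bound of Lemma~\ref{lem:vol}---has already been carried out, I do not expect a genuine obstacle: the statement is a clean packaging of the preceding material. The only thing to watch is the bookkeeping of $M-1$ versus $M$ in the arguments of $\mathrm{vol}_{s,n}$, ensuring that the index shift matches the hypothesis of the preceding Proposition exactly.
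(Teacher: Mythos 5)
Your proposal is correct and follows exactly the paper's own route: the paper likewise obtains the proposition by combining Lemma~\ref{lem:vol} with the preceding proposition, after noting the equivalence $e^{2\sqrt{(b-1)s(M-1)}}\leq p_b^{\,d}\iff M\leq (\log p_b)^2d^2/(4(b-1)s)+1$ via taking logarithms and squaring. Your bookkeeping of $M-1$ versus $M$ matches the paper's hypothesis $\mathrm{vol}_{s,n}(M-1)\leq p_b^{\,d}$, so there is nothing to add.
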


From now on, we define $\alpha_{b} := (\log{p_b})/2$ and $M' := A^2 d^2/((b-1)s)$
where $A \leq \alpha_{b}$ and
we take $M$ to be $\lfloor M' + 1 \rfloor$ so that
$P$ with $|P| \leq b^d$ and $\delta_{P^\perp} \geq M$ exists.
Then, by Proposition~\ref{prop:bound of C_{s,n}},
we have the following upper bound of $\WAFOM(P)$:

\begin{proposition}\label{prop:existWAFOM}
Let $\alpha_{b} := (\log{p_b})/2$.
Take a real number $A$ with $A \leq \alpha_{b}$ and
an arbitrary real number $c > 0$.
Then for any positive integers $s$, $n$, and $d \geq (1+c)(b-1)s/(A \log{b})$,
there exists a subgroup $P \subset G^{s \times n}$
with $|P| \leq b^d$ satisfying
$$
\WAFOM^n(P) \leq
	\left(1 + \frac{1+c}{c}\frac{1}{\log{b}}\right)
	b^{- A^2 d^2/((b-1)s)}e^{2A d}.
$$
\end{proposition}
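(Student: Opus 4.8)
The plan is to assemble the three ingredients already in hand: the existence statement Proposition~\ref{prop:M exists?}, the monotone bound Lemma~\ref{lem:WAFOM and min Dick wt}, and the complement-of-ball estimate Proposition~\ref{prop:bound of C_{s,n}}. Following the notation fixed just above the statement, I set $M' := A^2 d^2/((b-1)s)$ and take the integer $M := \lfloor M' + 1 \rfloor$. The whole argument is then a matter of checking that this single choice of $M'$ satisfies the hypotheses of both the existence result and the tail estimate simultaneously, and that the resulting bound simplifies to the claimed form.

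First I would verify admissibility for Proposition~\ref{prop:M exists?}. Since $A \leq \alpha_{b} = (\log p_b)/2$ we have $A^2 \leq (\log p_b)^2/4$, hence $M' \leq (\log p_b)^2 d^2/(4(b-1)s)$ and therefore $M = \lfloor M' + 1 \rfloor \leq M' + 1 \leq (\log p_b)^2 d^2/(4(b-1)s) + 1$. Proposition~\ref{prop:M exists?} then produces a subgroup $P \subset G^{s \times n}$ with $|P| \leq b^d$ and $\delta_{P^\perp} \geq M$. Feeding this $P$ into Lemma~\ref{lem:WAFOM and min Dick wt} gives $\WAFOM^n(P) \leq C_{s}(\delta_{P^\perp})$. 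Because $C_{s}(M) = \sum_{m=M}^{\infty} S_{s}(m) b^{-m}$ is non-increasing in $M$, and because $\delta_{P^\perp} \geq M \geq \lceil M' \rceil$ (the last inequality holding since $\lfloor M'+1\rfloor \geq \lceil M'\rceil$ for every real $M'$), I may enlarge the argument down to $\lceil M' \rceil$ and conclude $\WAFOM^n(P) \leq C_{s}(\lceil M' \rceil)$.

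Finally I would invoke Proposition~\ref{prop:bound of C_{s,n}}, whose hypothesis $M' \geq (1+c)^2(\log b)^{-2}(b-1)s$ is exactly the running assumption $d \geq (1+c)(b-1)s/(A\log b)$: squaring and clearing denominators turns the latter into $A^2 d^2 (\log b)^2 \geq (1+c)^2 (b-1)^2 s^2$, which becomes the former after dividing by $(b-1)s(\log b)^2$ and using the definition of $M'$. The proposition then yields $C_{s}(\lceil M' \rceil) \leq \left(1 + \frac{1+c}{c}\frac{1}{\log b}\right) b^{-M'} e^{2\sqrt{(b-1)sM'}}$. The only computation left is the clean simplification $\sqrt{(b-1)s M'} = \sqrt{A^2 d^2} = Ad$ together with $b^{-M'} = b^{-A^2 d^2/((b-1)s)}$, after which the chain $\WAFOM^n(P) \leq C_s(\lceil M'\rceil) \leq (\text{bound})$ is precisely the asserted inequality.

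There is no deep obstacle here; the statement is essentially a bookkeeping assembly of the preceding propositions. The one place that genuinely requires attention is the floor/ceiling juggling: I must ensure the integer $M$ realizable by a subgroup is at once $\geq \lceil M' \rceil$ (so the analytic tail bound applies) and $\leq (\log p_b)^2 d^2/(4(b-1)s)+1$ (so the existence result applies), \emph{with the same real number $M'$ governing both}. That the single choice $M' = A^2 d^2/((b-1)s)$ with $A \leq \alpha_{b}$ forces both conditions to hold at once is exactly why $\alpha_{b}$ and $M'$ were normalized as they were, so the apparent friction dissolves under the chosen parametrization.
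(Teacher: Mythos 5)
Your proposal is correct and follows essentially the same route as the paper's proof: the same choice $M' = A^2d^2/((b-1)s)$, $M=\lfloor M'+1\rfloor$, and the same assembly of Proposition~\ref{prop:M exists?}, Lemma~\ref{lem:WAFOM and min Dick wt}, and Proposition~\ref{prop:bound of C_{s,n}}. Your handling of the floor/ceiling step is in fact slightly more careful than the paper's, which writes $C_s(M)=C_s(\lceil M'\rceil)$ even though these can differ when $M'$ is an integer (harmlessly, since $C_s$ is non-increasing).
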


\begin{proof}
Define $M' := A^2 d^2/((b-1)s)$ and
$M := \lfloor M' + 1 \rfloor$. 
By Proposition~\ref{prop:M exists?},
there exists a subgroup $P \subset G^{s \times n}$
with $|P| \leq b^d$ and $\delta_{P^\perp} \geq M$.
For this $P$, from Lemma~\ref{lem:WAFOM and min Dick wt} and Proposition~\ref{prop:bound of C_{s,n}}
we have
\begin{align*}
\WAFOM(P) &\leq C_{s}(M)\\
&= C_{s}(\lceil M'\rceil)\\
&\leq
\left(1 + \frac{1+c}{c} \frac{1}{\log{b}} \right) b^{-M'} e^{2\sqrt{(b-1)sM'}}\\
&= 	\left(1 + \frac{1+c}{c}\frac{1}{\log{b}}\right)
	b^{-A^2 d^2/((b-1)s)}e^{2A d},
\end{align*}
which proves the proposition.\qed
\end{proof}

In particular, take $A = \alpha_{b}$
and we have the next theorem.

\begin{theorem}\label{thm:existWAFOM}
Let $\alpha_{b} := (\log{p_b})/2$ and take an arbitrary real number $c > 0$.
Then for any $s$, $n$, and $d \geq (1+c)(b-1)s/(\alpha_{b} \log{b})$,
there exists a subgroup $P \subset G^{s \times n}$
with $|P| \leq b^d$ satisfying
$$
\WAFOM(P) \leq
	\left(1 + \frac{1+c}{c}\frac{1}{\log{b}}\right)
	b^{-\alpha_{b}^2 d^2/((b-1)s)}e^{2\alpha_{b} d}.
$$
\end{theorem}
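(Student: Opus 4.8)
The plan is to derive this theorem as the boundary case $A = \alpha_{b}$ of the immediately preceding Proposition~\ref{prop:existWAFOM}, so essentially no new work is required beyond a substitution and a short justification of why this particular choice of $A$ is the one worth recording.

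First I would check that $A = \alpha_{b}$ is admissible: the hypothesis of Proposition~\ref{prop:existWAFOM} asks for a real number $A$ with $A \leq \alpha_{b}$, and $A = \alpha_{b}$ meets this with equality. Substituting $A = \alpha_{b}$ into the hypothesis $d \geq (1+c)(b-1)s/(A \log{b})$ reproduces verbatim the condition $d \geq (1+c)(b-1)s/(\alpha_{b} \log{b})$ appearing in the theorem, and the same substitution into the conclusion reproduces the stated bound on $\WAFOM(P)$. Hence the theorem follows at once.

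The one point that deserves a sentence of explanation is \emph{why} one singles out $A = \alpha_{b}$ among all admissible values $A \leq \alpha_{b}$. Increasing $A$ makes the required lower bound $(1+c)(b-1)s/(A \log{b})$ on $d$ smaller, so the hypothesis on $d$ is weakest precisely when $A$ is largest; simultaneously, the leading factor $b^{-A^2 d^2/((b-1)s)}$ in the conclusion is smallest (its exponent most negative) when $A$ is largest. Thus $A = \alpha_{b}$ yields, at one stroke, both the weakest hypothesis on $d$ and the strongest bound on $\WAFOM(P)$, making it the natural endpoint to record as a standalone statement.

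I do not anticipate any genuine obstacle here: all of the substance — the probabilistic construction of $P$ from $d$ random generators, the union bound over $\mathscr{B}_{s,n}(M-1)$ controlling $\prob[\delta_{P^\perp} \geq M]$, and the analytic tail estimate of Proposition~\ref{prop:bound of C_{s,n}} — is already assembled in Propositions~\ref{prop:M exists?}, \ref{prop:bound of C_{s,n}}, and \ref{prop:existWAFOM}. The theorem is a clean specialization, and the proof reduces to invoking Proposition~\ref{prop:existWAFOM} with $A \gets \alpha_{b}$ and observing that both the hypothesis and the conclusion specialize to the ones asserted.
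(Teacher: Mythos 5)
Your proposal is correct and is exactly the paper's own argument: the paper derives the theorem by the single line ``In particular, take $A = \alpha_{b}$'' applied to Proposition~\ref{prop:existWAFOM}. Your added remark on why $A=\alpha_b$ is optimal (weakest hypothesis on $d$, strongest bound) is accurate but not needed for the proof.
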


Applying Theorem~\ref{thm:existWAFOM} to the case $G = \Ftwo$,
we can improve \cite[Theorem~2 and Remark~5]{MatsumotoYoshiki}.

\begin{corollary}\label{cor:WAFOMexistsF2}
Let $\alpha := \alpha_{2} = (\log{2})/2$
and take an arbitrary real number $c > 0$.
Then for any $n$ and $d \geq (1+c)s/(\alpha \log{2})$,
there exists a linear subspace $P \subset \Ftwo^{s \times n}$
with $\dim{P} \leq d$ satisfying
$$
\WAFOM(P) \leq
	\left(1 + \frac{1+c}{c}\frac{1}{\log{2}}\right)
	2^{-\alpha^2 d^2/s}e^{2\alpha d}.
$$
\end{corollary}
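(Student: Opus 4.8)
The plan is to specialize Theorem~\ref{thm:existWAFOM} to the group $G = \Ftwo$. Since $|\Ftwo| = 2$, we have $b = 2$, and the smallest prime factor of $b$ is $p_b = 2$. Consequently $\alpha_b = (\log p_b)/2 = (\log 2)/2 = \alpha$, which matches the definition given in the statement, and the factor $b-1 = 1$ simplifies the exponents. First I would substitute $b = 2$ directly into each quantity appearing in Theorem~\ref{thm:existWAFOM}: the hypothesis $d \geq (1+c)(b-1)s/(\alpha_b \log b)$ becomes $d \geq (1+c)s/(\alpha \log 2)$; the base $b^{-\alpha_b^2 d^2/((b-1)s)}$ becomes $2^{-\alpha^2 d^2/s}$ since $(b-1)s = s$; and the remaining factors $(1 + \frac{1+c}{c}\frac{1}{\log b})$ and $e^{2\alpha_b d}$ become $(1 + \frac{1+c}{c}\frac{1}{\log 2})$ and $e^{2\alpha d}$ respectively. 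This reproduces exactly the asserted bound.

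The only genuine content beyond the substitution is the shift in terminology from ``subgroup $P \subset G^{s\times n}$ with $|P| \leq b^d$'' to ``linear subspace $P \subset \Ftwo^{s\times n}$ with $\dim P \leq d$.'' When $G = \Ftwo$, the group $\Ftwo^{s\times n}$ is an $\Ftwo$-vector space, and any subgroup is automatically an $\Ftwo$-linear subspace (since the only nonzero scalar is $1$, closure under addition already gives closure under scalar multiplication). Moreover, the span $P = \langle B_1, \dots, B_d \rangle$ constructed in the existence argument is precisely the $\Ftwo$-linear span of $d$ vectors, so $\dim_{\Ftwo} P \leq d$; this is the linear-algebra translation of the condition $|P| \leq 2^d$. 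Thus I would simply remark that for $G = \Ftwo$ the notions of subgroup and linear subspace coincide, and $|P| \leq 2^d$ is equivalent to $\dim P \leq d$.

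I do not expect any real obstacle here: the corollary is a direct specialization, and the claim that it improves \cite[Theorem~2 and Remark~5]{MatsumotoYoshiki} is a comparison of constants rather than something requiring proof. If one wished to make the improvement explicit, the step to check would be that the constant $\frac{1}{2}\alpha^2 / s$ appearing in the exponent (coming from $\alpha = (\log 2)/2$, so $\alpha^2 = (\log 2)^2/4$) yields a better rate than the earlier bound, but since the statement only asserts existence with the displayed bound, the proof is complete once the substitution is carried out. Hence the proof is a single sentence invoking Theorem~\ref{thm:existWAFOM} with $b = 2$ together with the observation identifying subgroups and subspaces.

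\begin{proof}
Apply Theorem~\ref{thm:existWAFOM} with $G = \Ftwo$, so that $b = 2$ and $p_b = 2$, whence $\alpha_b = (\log 2)/2 = \alpha$ and $b - 1 = 1$. Since $\Ftwo^{s \times n}$ is an $\Ftwo$-vector space, every subgroup is an $\Ftwo$-linear subspace, and the condition $|P| \leq 2^d$ is equivalent to $\dim P \leq d$. Substituting $b = 2$ into the bound of Theorem~\ref{thm:existWAFOM} gives the stated inequality.\qed
\end{proof}
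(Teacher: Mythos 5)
Your proof is correct and matches the paper's approach exactly: the paper treats this corollary as an immediate specialization of Theorem~\ref{thm:existWAFOM} to $G=\Ftwo$ (so $b=2$, $p_b=2$, $\alpha_2=(\log 2)/2$, $b-1=1$), with the identification of subgroups of $\Ftwo^{s\times n}$ with $\Ftwo$-linear subspaces and of $|P|\leq 2^d$ with $\dim P\leq d$ being the only translation needed.
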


\begin{remark}
Suzuki \cite{Suzuki2014ecp} proved that the construction of higher order digital nets on $\Fp$
given in \cite{Dick2008wsc} combined with some Niederreiter-Xing point sets 
\cite{Niederreiter2001rpo} yields an explicit construction of low-WAFOM point sets,
whose order of WAFOM is almost the same with the order obtained in this paper.
\end{remark}

\subsection{A lower bound of WAFOM}\label{a lower bound of WAFOM}
In this subsection, we show a lower bound on WAFOM($P$),
as a generalization of \cite{Yoshiki2014lbw}.
The next lemma gives an upper bound on the minimum Dick weight
of $P^\perp$ for given $P \subset G^{s \times n}$,
which implies a lower bound of WAFOM($P$).

\begin{lemma}\label{lem:upper bound of wt}
Suppose that $s$ and $n$ are positive integers.
Let $P \subset G^{s \times n}$ be a subgroup
with $|P| \leq b^d$.
Let $q,r$ be nonnegative integers which satisfy
$d = qs+r$ and $0 \leq r < s$.
Then we have the following:
\begin{enumerate}
\item
$\delta_{P^\perp} \leq sq(q+1)/2 + (q+1)(r+1) \leq d^2/2s + 3d/2 + s$.
\item Let $C$ be an arbitrary positive real number greater than $1/2$.
If $d/s \geq (\sqrt{C + 1/16} + 3/4)/(C - 1/2)$ holds,
then we have
$\delta_{P^\perp} \leq C d^2/s$.
\end{enumerate}
\end{lemma}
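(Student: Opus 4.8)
The plan is to derive part (2) as an elementary consequence of part (1), so the real content lies in the pigeonhole construction behind part (1).

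For part (1), I would produce a nonzero element of $P^\perp$ supported only on cheap positions (small column indices). Concretely, let $S \subset (G^\vee)^{s \times n}$ be the set of matrices $A = (a_{i,j})$ that vanish outside the positions $\{(i,j) : j \le q\} \cup \{(i,q+1) : 1 \le i \le r+1\}$. Since the entries on these positions are free, $S$ is a subgroup with exactly $sq + (r+1) = d+1$ free coordinates, so $|S| = b^{d+1}$. The quotient map $(G^\vee)^{s \times n} \to (G^\vee)^{s \times n}/P^\perp$ has target of order $|P| \le b^d$, so its restriction to $S$ has kernel $S \cap P^\perp$ of size at least $|S|/|P| \ge b^{d+1}/b^d = b \ge 2$. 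Hence $S \cap P^\perp$ contains a nonzero matrix $A$, and since $A \in S$ its Dick weight is at most that of the matrix with every allowed entry nonzero:
\[
\mu(A) \le s(1 + 2 + \cdots + q) + (q+1)(r+1) = \frac{sq(q+1)}{2} + (q+1)(r+1).
\]
This yields the first bound on $\delta_{P^\perp}$.

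The second inequality of part (1) I would treat as routine algebra: writing $sq = d - r$ one rewrites the middle expression as $(d-r+s)(d+r+2)/(2s)$ and the target $d^2/(2s) + 3d/2 + s$ as $(d+s)(d+2s)/(2s)$; the inequality then follows from $0 \le r < s$, since these give $d - r + s \le d + s$ and $d + r + 2 \le d + 2s$ with all factors positive. For part (2), I would simply insert the bound $\delta_{P^\perp} \le d^2/(2s) + 3d/2 + s$ and set $x := d/s$; the claim $d^2/(2s) + 3d/2 + s \le C d^2/s$ becomes, after dividing by $s$, the quadratic inequality $(C - 1/2)x^2 - (3/2)x - 1 \ge 0$. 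Since $C > 1/2$ the leading coefficient is positive and the larger root equals $(\sqrt{C + 1/16} + 3/4)/(C - 1/2)$, using $\sqrt{9/4 + 4(C - 1/2)} = 2\sqrt{C + 1/16}$, so the hypothesis $d/s \ge (\sqrt{C + 1/16} + 3/4)/(C - 1/2)$ forces the inequality.

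The only genuinely delicate step is the choice of the support set $S$ in part (1). A crude choice such as all of columns $1, \dots, q+1$ would give $|S| = b^{s(q+1)}$ and the weaker weight bound $s(q+1)(q+2)/2$; the sharp constant comes from allowing exactly $r+1$ rows in column $q+1$, which makes $|S| = b^{d+1}$ just large enough to beat $|P|$ by the pigeonhole while limiting the extra weight contribution to $(q+1)(r+1)$. Everything else is bookkeeping.
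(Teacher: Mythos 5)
Your proof is correct and follows essentially the same route as the paper: the paper defines the same support subgroup (called $Q$ there, with $|Q|=b^{d+1}$), extracts a nonzero element of $P^\perp\cap Q$ by the same counting argument (phrased via the second isomorphism theorem rather than the quotient map, but equivalently), and reduces part (2) to the same quadratic inequality in $d/s$. The only cosmetic difference is that the paper bounds $sq(q+1)/2+(q+1)(r+1)$ using $q\le d/s$ and $r+1\le s$ directly instead of your factorization $(d-r+s)(d+r+2)/(2s)$; both are fine.
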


\begin{proof}
We define a subgroup $Q := \{A=(a_{ij}) \in (G^{\vee})^{s \times n} \mid  a_{ij} = 0
\text{ if } (q+2 \leq j \leq n) \text{ or } (j = q+1 \text{ and } r+2 \leq i \leq s)  \}$.
We have $|Q| = b^{qs + r+1} = b^{d+1}$.
There is a $\bZ$-module isomorphism
$P^\perp / (P^\perp \cap Q)  \simeq (P^\perp + Q) / Q$,
and thus we have
$$
|P^\perp \cap Q| = \frac{|P^\perp| \cdot |Q|}{|P^\perp + Q|}
\geq \frac{b^{sn-d} \cdot b^{d+1}} {|(G^{\vee})^{s \times n}|}
= b,
$$
especially there exists a non-zero matrix $A' \in (P^\perp \cap Q)$.
Therefore we have
\begin{align*}
\delta_{P^\perp} \leq \mu(A') \leq \max \{\mu(A) \mid A=(a_{ij}) \in Q\}
= sq(q+1)/2 + (q+1)(r+1),
\end{align*}
where the last equality holds if the components of $A$ is as follows:
$$
\begin{cases}
a_{ij} = 0     \text{ if } (q+2 \leq j \leq n) \text{ or } (j = q+1 \text{ and } r+2 \leq i \leq s)  \\
a_{ij} \neq 0  \text{ if } (1 \leq j \leq q) \text{ or } (j = q+1 \text{ and } 1 \leq i \leq r+1)    \\
\end{cases}
.$$
In particular, since $q \leq d/s$ and $r+1 \leq s$, we have
\begin{align*}
\delta_{P^\perp} &\leq sq(q+1)/2 + (q+1)(r+1)\\
&\leq \frac{d}{2}\left(\frac{d}{s} + 1\right) + \left(\frac{d}{s} + 1\right)s
=\frac{d^2}{s}\left(\frac{1}{2}+\frac{3s}{2d}+\frac{s^2}{d^2}\right),
\end{align*}
which proves the first statement.

Let $C$ be a real number greater than $1/2$ and
we assume $d/s \geq (\sqrt{C + 1/16} + 3/4)/(C - 1/2)$.
Then we have $1/2 + 3s/2d + s^2/d^2 \leq C$.
Thus we obtain
$$
\delta_{P^\perp} \leq \frac{d^2}{s}\left(\frac{1}{2}+\frac{3s}{2d}+\frac{s^2}{d^2}\right)
\leq C d^2/s,
$$
which proves the second statement.\qed
\end{proof}

The above lemma gives a lower bound of $\WAFOM(P)$.

\begin{theorem}\label{thm:lower bound of WAFOM}
Suppose that $s$ and $n$ are positive integers.
Let $G$ be a finite abelian group with $b \geq 2$ elements.
Let $P \subset G^{s \times n}$ be a subgroup
with $|P| \leq b^d$.
Let $C$ be an arbitrary positive real number greater than $1/2$.
If $d/s \geq (\sqrt{C + 1/16} + 3/4)/(C - 1/2)$ holds,
then we have
$$
\WAFOM^n(P) \geq b^{-C d^2/s}.
$$
\end{theorem}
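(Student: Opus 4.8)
The plan is to derive the lower bound on $\WAFOM^n(P)$ directly from the upper bound on the minimum Dick weight $\delta_{P^\perp}$ established in Lemma~\ref{lem:upper bound of wt}. The key observation is that $\WAFOM^n(P)$ is a sum of nonnegative terms $b^{-\mu(A)}$ over all nonzero $A \in P^\perp$, so it is bounded below by any single term. First I would invoke part (2) of Lemma~\ref{lem:upper bound of wt}: under the hypothesis $d/s \geq (\sqrt{C+1/16}+3/4)/(C-1/2)$, there exists a nonzero matrix $A' \in P^\perp$ with $\mu(A') = \delta_{P^\perp} \leq C d^2/s$.

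Given such an $A'$, I would simply estimate
$$
\WAFOM^n(P) = \sum_{A \in P^\perp \backslash \{O\}}{b^{-\mu(A)}} \geq b^{-\mu(A')} = b^{-\delta_{P^\perp}} \geq b^{-C d^2/s},
$$
where the first inequality drops all terms except the one corresponding to $A'$, and the final inequality uses $\delta_{P^\perp} \leq C d^2/s$ together with the fact that $t \mapsto b^{-t}$ is decreasing. This completes the proof.

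There is no real obstacle here: the entire content of the theorem has already been packed into Lemma~\ref{lem:upper bound of wt}, whose part (2) supplies exactly the bound $\delta_{P^\perp} \leq C d^2/s$ under the stated hypothesis on $d/s$. The only thing to verify is that at least one nonzero element of $P^\perp$ is guaranteed, which the lemma's proof furnishes via the pigeonhole-type argument showing $|P^\perp \cap Q| \geq b > 1$. The single-term lower bound is the natural and essentially forced move once the minimum weight is controlled, so the argument is a short two-line chain of inequalities rather than anything computational.
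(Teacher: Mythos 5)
Your proposal is correct and coincides with the paper's own proof: the paper likewise writes $\WAFOM^n(P) = \sum_{A \in P^\perp \backslash \{O\}} b^{-\mu(A)} \geq b^{-\delta_{P^\perp}} \geq b^{-Cd^2/s}$, with the last step supplied by part (2) of Lemma~\ref{lem:upper bound of wt}. Nothing further is needed.
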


\begin{proof}
$$
\WAFOM^n(P) = \sum_{A \in P^\perp \backslash \{O\}}{b^{-\mu (A)}}
\geq {b^{-\delta_{P^\perp}}}
\geq b^{-C d^2/s}.\qed
$$
\end{proof}

\subsection{Order of WAFOM}
In this subsection,
we consider the order of
$\WAFOM(P)$
where $P$ is a subgroup of $G^{s \times n}$ with $|P| = b^d$.

We fix the base $b$.
Let $D := \alpha_{b} =
(\log{p_b}$)/2.
We fix a positive integer $E$
satisfying $E > (b-1)/(D\log{b})$.
Let $c$ be the real number such that
$E = (1+c)(b-1)/(D\log{b})$
(by the assumption that $E > (b-1)/(D\log{b})$, $c$ is positive).
Note that $c$, $D$ and $E$ depend only on $b$.

We assume that $d/s \geq E$.
Then, by Proposition~\ref{prop:existWAFOM},
there exists a subgroup $P \subset G^{s \times n}$ with $|P| \leq b^d$
satisfying
$$
\WAFOM^n(P) \leq
	\left(1 + \frac{1+c}{c}\frac{1}{\log{b}}\right)
	b^{-D^2 d^2/((b-1)s)}e^{2D d}.
$$
Moreover, by Theorem~\ref{thm:lower bound of WAFOM},
for every $P$ with $|P| \leq b^d$
we have  $\WAFOM^n(P) \geq b^{-Cd^2/s}$
where $C = (1/2 + 3/(2E) + 1/E^2)$.
Thus we have the following lemma.

\begin{lemma}\label{lem:order of WAFOM}
If $d/s \geq E$, we have
\begin{align*}
-Cd^2/s 
&\leq
\min\{\log_b(\WAFOM^n(P)) \mid P \subset G^{s \times n} \text{ subgroup}, |P| \leq b^d\} \\
&\leq -D^2 d^2/((b-1)s) + 2Dd/\log{b} +
\log_b\left(1 + \frac{1+c}{c}\frac{1}{\log{b}}\right).
\end{align*}
\end{lemma}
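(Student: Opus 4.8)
The plan is to derive the two displayed inequalities independently and then concatenate them, since both the lower and the upper bound are immediate consequences of results already established, and they hold under the \emph{single} hypothesis $d/s \geq E$. The only real work is to verify that this hypothesis correctly triggers the hypotheses of the earlier propositions and to convert the natural-exponential factor into a base-$b$ logarithm.

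For the left-hand (lower) inequality I would show that \emph{every} subgroup $P \subset G^{s\times n}$ with $|P| \leq b^d$ satisfies $\WAFOM^n(P) \geq b^{-Cd^2/s}$. First apply part~(1) of Lemma~\ref{lem:upper bound of wt}, which yields $\delta_{P^\perp} \leq (d^2/s)\left(1/2 + 3s/(2d) + s^2/d^2\right)$. Under $d/s \geq E$ one has $3s/(2d) \leq 3/(2E)$ and $s^2/d^2 \leq 1/E^2$, so $\delta_{P^\perp} \leq Cd^2/s$ with $C = 1/2 + 3/(2E) + 1/E^2$, exactly the constant recorded in the statement. Since $\WAFOM^n(P) = \sum_{A \in P^\perp \backslash \{O\}} b^{-\mu(A)} \geq b^{-\delta_{P^\perp}}$, taking $\log_b$ and then minimizing over all such $P$ gives the lower bound. (Alternatively one may quote Theorem~\ref{thm:lower bound of WAFOM} directly, after checking that $d/s \geq E$ implies the threshold $d/s \geq (\sqrt{C+1/16}+3/4)/(C-1/2)$.)

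For the right-hand (upper) inequality I would invoke Proposition~\ref{prop:existWAFOM} with $A = D = \alpha_b$. Its hypothesis $d \geq (1+c)(b-1)s/(D\log b)$ is, after dividing by $s$, precisely $d/s \geq (1+c)(b-1)/(D\log b) = E$ by the defining relation $E = (1+c)(b-1)/(D\log b)$; thus $d/s \geq E$ guarantees the existence of a subgroup $P$ with $|P| \leq b^d$ and
$$\WAFOM^n(P) \leq \left(1 + \frac{1+c}{c}\frac{1}{\log b}\right) b^{-D^2 d^2/((b-1)s)} e^{2Dd}.$$
Taking $\log_b$ of both sides and using $\log_b(e^{2Dd}) = 2Dd/\log b$ bounds $\log_b \WAFOM^n(P)$ for this particular $P$, which in turn bounds the minimum from above by exactly the right-hand side of the lemma.

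Combining the two displays produces the asserted chain of inequalities. I do not expect a genuine obstacle here: the substantive analytic estimates were already carried out in Proposition~\ref{prop:existWAFOM} and Lemma~\ref{lem:upper bound of wt}, so what remains is careful bookkeeping, namely matching the combinatorial constant $C$ with the value in the statement and rewriting the factor $e^{2Dd}$ as the logarithmic term $2Dd/\log b$ under the change of base.
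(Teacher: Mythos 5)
Your proposal is correct and follows essentially the same route as the paper: the upper bound comes from Proposition~\ref{prop:existWAFOM} with $A=D=\alpha_b$ (whose hypothesis is exactly $d/s\geq E$ by the definition of $E$), and the lower bound from Theorem~\ref{thm:lower bound of WAFOM} (equivalently, part~(1) of Lemma~\ref{lem:upper bound of wt} together with $3s/(2d)+s^2/d^2\leq 3/(2E)+1/E^2$), followed by taking $\log_b$. Your bookkeeping of the constants $C$, $E$, and the conversion $\log_b(e^{2Dd})=2Dd/\log b$ all check out.
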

Especially, let $N = b^d$ and we have the following.
\begin{theorem}
Let $G$ be a finite abelian group with $|G| = b$.
Let $P \subset G^{s \times n}$ be a subgroup with $|P| \leq N$.
Let $c$, $C$, $D$, and $E$ are constants as Lemma~\ref{lem:order of WAFOM},
which depend only on $b$.
Suppose that $(\log{N})/s \geq E$.
Then we have
\begin{align*}
N^{-C(\log{N})/s}
&\leq
\min\{\WAFOM^n(P) \mid P \subset G^{s \times n} \text{ subgroup}, |P| \leq N\} \\
&\leq \left(1 + \frac{1+c}{c}\frac{1}{\log{b}}\right)
	N^{-D^2 (\log{N})/((\log{b})(b-1)s) + 2D/\log{b}}.
\end{align*}
\end{theorem}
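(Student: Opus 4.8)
The plan is to obtain this theorem as a direct reformulation of Lemma~\ref{lem:order of WAFOM}, passing from the parameter $d$ to the point-set cardinality $N$ through the substitution $N = b^d$. First I would record the elementary identities governing this change of variable: from $N = b^d$ one has $d = \log_b N$, hence $b^{\beta d} = N^{\beta}$ for every real $\beta$, and also $b^{1/\log b} = e$ since $b = e^{\log b}$. Under this identification the hypothesis $(\log N)/s \geq E$ is precisely the hypothesis $d/s \geq E$ required by Lemma~\ref{lem:order of WAFOM}, and the constraint $|P| \leq b^d$ there becomes $|P| \leq N$. Thus the lemma applies with no additional work.

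Next I would exponentiate the two bounds of Lemma~\ref{lem:order of WAFOM}. Because $b > 1$, the map $t \mapsto b^{t}$ is strictly increasing, so it preserves both inequalities and commutes with the minimum:
$$
\min_{P} \WAFOM^n(P)
= \min_{P} b^{\log_b \WAFOM^n(P)}
= b^{\,\min_{P} \log_b \WAFOM^n(P)}.
$$
Applying $b^{(\cdot)}$ to the lower bound $-Cd^2/s$ gives $b^{-Cd^2/s} = N^{-Cd/s}$, which upon writing $d = \log_b N$ is the claimed left-hand inequality.

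For the upper bound I would exponentiate the right-hand side of the lemma term by term. The additive constant $\log_b\!\bigl(1 + \tfrac{1+c}{c}\tfrac{1}{\log b}\bigr)$ exponentiates to the prefactor $1 + \tfrac{1+c}{c}\tfrac{1}{\log b}$; the term $-D^2 d^2/((b-1)s)$ yields $b^{-D^2 d^2/((b-1)s)} = N^{-D^2 d/((b-1)s)} = N^{-D^2(\log N)/((\log b)(b-1)s)}$; and the remaining term $2Dd/\log b$ yields $b^{2Dd/\log b} = \bigl(b^{1/\log b}\bigr)^{2Dd} = e^{2Dd} = N^{2D/\log b}$. Collecting the two $N$-powers into a single exponent produces exactly the right-hand side of the theorem.

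The argument is in essence bookkeeping: all the analytic content—the combinatorial volume estimates of Lemma~\ref{lem:vol}, the tail bound for $C_s$ in Proposition~\ref{prop:bound of C_{s,n}}, the probabilistic existence in Proposition~\ref{prop:existWAFOM}, and the lower bound Theorem~\ref{thm:lower bound of WAFOM}—has already been absorbed into Lemma~\ref{lem:order of WAFOM}. I therefore expect no genuine obstacle. The only point requiring care is the consistent treatment of the logarithm base when re-expressing powers of $b$ as powers of $N$, in particular keeping track of the factor $\log b$ that appears both when converting $\log_b N$ to $\log N$ and when turning the $e^{2Dd}$ factor into $N^{2D/\log b}$; getting these conversions right is what makes the stated constants line up.
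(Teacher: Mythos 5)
Your proposal is correct and is essentially the paper's own argument: the paper derives the theorem from Lemma~\ref{lem:order of WAFOM} simply by setting $N = b^d$ and exponentiating, exactly as you do. Your explicit tracking of the base-$b$ versus natural-logarithm conversions (e.g.\ $b^{2Dd/\log b} = e^{2Dd} = N^{2D/\log b}$) is in fact more careful than the paper's one-line justification, and it confirms that the stated constants line up.
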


\begin{acknowledgement}
The author would like to express his gratitude to 
his supervisor Professor Makoto Matsumoto
for the patient guidance, encouragement and many helpful discussions and comments.
\end{acknowledgement}


\end{document}